\numberwithin{equation}{section}
\numberwithin{equation}{section}
\newtheorem{theorem}{Theorem}[section]
\newtheorem{lemma}[theorem]{Lemma}
\newtheorem{corollary}[theorem]{Corollary}
\newtheorem{proposition}[theorem]{Proposition}
\theoremstyle{definition}
\newtheorem{remark}[theorem]{Remark}
\newtheorem{definition}[theorem]{Definition}
\theoremstyle{remark}
\title[On Herz-Bochkarev limiting problem]{On Herz-Bochkarev limiting problem}
\author[E. Nursultanov]{Erlan Nursultanov}
\address{Erlan Nursultanov\\ Department of Mathematics and Informatics\\
Lomonosov Moscow State University, Kazakhstan Branch\\
and
Institute of Mathematics and Mathematical Modelling, Almaty
Kazakhstan}
\email{er-nurs@yandex.kz}
\author[A. Ghorbanalizadeh]{Arash Ghorbanalizadeh}
\address{Arash Ghorbanalizadeh\\
	Department of Mathematics\\
    Institute for Advanced Studies in Basic Sciences (IASBS) \\
    45137-66731 Zanjan, Iran
    \\ and 
    Department of Mathematics\\
 Nazarbayev University \\
	53 Kabanbay Batyr Ave, 010000  Astana\\
	Kazakhstan \\ 
    }
\email{ arash.ghorbanalizadeh@nu.edu.kz; ghorbanalizadeh@iasbs.ac.ir}
\author[D. Suragan]{Durvudkhan Suragan}
\address{Durvudkhan Suragan \\
	Department of Mathematics \\
 Nazarbayev University \\
	53 Kabanbay Batyr Ave, 010000 Astana  \\
	Kazakhstan}
\email{durvudkhan.suragan@nu.edu.kz}
\begin{document}

\begin{abstract}  
This paper studies Hausdorff–Young-type inequalities within the framework of Lorentz spaces $L_{p,q}$. Focusing on the dependence of the associated constants on the integrability parameter \(p\), we derive optimal bounds in the limiting case \( p \rightarrow 2 \), addressing the Herz-Bochkarev problem. The results obtained refine the pioneering estimates in \cite{Boch} and are comparable to recent advances in \cite{ST}. The main ingredients of our approach are new grand Lorentz space techniques.
\end{abstract}  

\maketitle

\section{Introduction and main results}

One of the classical problems in Fourier analysis is the following: Given a function, estimate its Fourier coefficients in terms of the function itself.  Parseval's equality is probably the best-known example in this field. Consider a function \( f \) that is integrable in \([0, 1]\). Let \( \{ \varphi_k \}_{k=1}^\infty \) denote an orthonormal system. The corresponding Fourier coefficients of \( f \) with respect to the system \( \{ \varphi_k \}_{k=1}^\infty \) are defined by
\begin{equation}\label{Coef}
a_k = a_k(f):= \int_0^1 f(x) \overline{\varphi_k(x)} \,dx.    
\end{equation}

If \( f \in L_2([0,1]) \), then Parseval's equality is \(\|f\|_{L_2} = \|a\|_{\ell_2}:= \left( \sum\limits_{k \in \mathbb{Z}} |a_k|^2 \right)^{1/2},\) where
\(
a_k = a_k(f) := \int_0^1 f(x) e^{-2\pi i k x} \, dx
\)
are the Fourier coefficients of \( f \) with respect to the trigonometric system. 

Historically, Young and Hausdorff extended this result to spaces \( L_p :=L_{p}([0, 1])\). They proved that 
\begin{theorem}\label{T-1-1}
Let \( 1 < p \leq 2 \) and \( f \in L_{p} \). 
Then, the following results hold:  
\begin{equation}\label{YH-1}
\|a\|_{\ell_{p'}} = \left( \sum_{k\in \mathbb{Z}} |a_k|^{p'} \right)^{1/p'}\leq \|f\|_{L_p}.
\end{equation}
\end{theorem}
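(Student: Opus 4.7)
The plan is to prove Theorem \ref{T-1-1} by the classical Riesz--Thorin interpolation argument, treating the Fourier coefficient map as a linear operator and interpolating between the two natural endpoints $p=1$ and $p=2$.

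First I would define the linear operator $T: f \mapsto (a_k(f))_{k \in \mathbb{Z}}$, where $a_k(f)$ is given by \eqref{Coef} with $\varphi_k(x) = e^{2\pi i k x}$. The two endpoint bounds are essentially free. At $p=1$, one estimates directly
\begin{equation*}
|a_k(f)| \leq \int_0^1 |f(x)| \, dx = \|f\|_{L_1},
\end{equation*}
so $T: L_1 \to \ell_\infty$ with operator norm at most $1$. At $p=2$, Parseval's identity (the orthonormality of the trigonometric system on $[0,1]$) yields $\|T f\|_{\ell_2} = \|f\|_{L_2}$, so $T: L_2 \to \ell_2$ has operator norm exactly $1$.

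Next I would invoke the Riesz--Thorin interpolation theorem. For $\theta \in [0,1]$, define $p_\theta$ and $p'_\theta$ by
\begin{equation*}
\frac{1}{p_\theta} = \frac{1-\theta}{1} + \frac{\theta}{2}, \qquad \frac{1}{p'_\theta} = \frac{1-\theta}{\infty} + \frac{\theta}{2} = \frac{\theta}{2}.
\end{equation*}
A direct computation shows $1/p_\theta + 1/p'_\theta = 1$, so $p'_\theta$ is indeed the Hölder conjugate of $p_\theta$, and as $\theta$ ranges over $[0,1]$, $p_\theta$ covers $[1,2]$. Riesz--Thorin then gives
\begin{equation*}
\|T f\|_{\ell_{p'}} \leq 1^{1-\theta} \cdot 1^{\theta} \, \|f\|_{L_p} = \|f\|_{L_p},
\end{equation*}
which is \eqref{YH-1}.

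There is no genuine obstacle here; the only subtle point is the standard verification that the Riesz--Thorin hypotheses are satisfied, namely that $T$ is well-defined on the intersection $L_1 \cap L_2$ and is linear, so that the complex-interpolation machinery applies between the couples $(L_1, L_2)$ and $(\ell_\infty, \ell_2)$. Both are routine. Alternatively, one could avoid the full interpolation theorem by writing $f = \sum_j f \chi_{E_j}$ over suitable level sets and running a direct three-lines-lemma argument on the analytic family $f_z = |f|^{z} \operatorname{sgn}(f)$ with appropriate normalization, but invoking Riesz--Thorin as a black box is cleaner and self-contained given the classical background.
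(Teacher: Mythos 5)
Your argument is correct: the two endpoint bounds ($\|Tf\|_{\ell_\infty}\le\|f\|_{L_1}$ from $|e^{-2\pi i kx}|=1$, and $\|Tf\|_{\ell_2}=\|f\|_{L_2}$ from Parseval) are exactly right, the exponent arithmetic $1/p_\theta+1/p'_\theta=1$ checks out, and Riesz--Thorin (valid for complex scalars and for counting measure on $\mathbb{Z}$) yields the inequality with constant $1^{1-\theta}1^{\theta}=1$, as the statement requires. Note, however, that the paper does not prove Theorem \ref{T-1-1} at all: it is quoted as classical background, with only a historical sketch of the original route --- Young's proof via his convolution inequality in the case where $p'$ is an even integer, Hausdorff's extension to the full range $1\le p\le 2$, and F.~Riesz's generalization to uniformly bounded orthonormal systems. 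Your complex-interpolation proof is the standard modern replacement for that historical argument; it is shorter, gives the sharp constant $1$ directly, and (since the $L_1\to\ell_\infty$ bound only uses $\sup_k\|\varphi_k\|_\infty\le 1$) in fact proves the Riesz variant for any orthonormal system uniformly bounded by $1$, which is the form the rest of the paper implicitly relies on when it speaks of ``an arbitrary given orthonormal system.'' The only cosmetic slip is the aside about the analytic family $f_z=|f|^z\operatorname{sgn}(f)$, where the normalization would need $f_z=|f|^{p(1-z/2+z/2\cdot\,\cdot)}$-type exponents to be spelled out, but since you invoke Riesz--Thorin as a black box this does not affect the proof.
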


As usual, here and afterwards, \(p'\) denotes the conjugate exponent to \(p\), that is,  \(p'=\frac{p}{p-1}\).
Note that this theorem does not hold for \( p > 2 \) (see \cite{Zyg}, Ch. 12).  
Theorem \ref{T-1-1} was first proved by Young for the case where \( p' \) is an even positive number. The key idea in Young's proof was the convolution inequality obtained by him. In 1923, Hausdorff extended Young's result to the whole range \( [1,2] \). In the same year, F. Riesz proved that an analogue of Theorem \ref{T-1-1} holds for any uniformly bounded orthonormal system.

In the context of Lorentz spaces 
$L_{p,q}:=L_{p,q}([0,1])$, various methods have been developed to establish inequalities of this kind. Among them, interpolation arguments play a central role; see~\cite{Hu}, \cite[p.~490]{Stein}, and \cite[Chap.~V]{Stein-1}. We recall that the Hausdorff--Young inequality in the Lorentz space setting takes the form
\begin{equation}
\|a\|_{\ell_{p',q}} \lesssim \|f\|_{L_{p,q}}, \quad  1 < p < 2,\; 1 \leq q \leq \infty.
\end{equation}
Zygmund’s maximal theorem is another fundamental tool in the analysis of Lorentz and Orlicz spaces, particularly in the study of the Hausdorff--Young inequality. This principle underlies the work in~\cite{CT-1, CT-2, CT-3}. Note that in~\cite{A}, the author studied Zygmund’s result for \( L_{p}(\mathbb{R}) \) spaces using Carleson’s theorem. In addition, a number of studies explore new function spaces in which these inequalities can be formulated and analyzed; see, for example,~\cite{BennettRudnick1983, Si-2, NS, NurSur25}. Another significant approach to understanding the behavior of Fourier coefficients in Lorentz spaces--or more generally, rearrangement-invariant spaces--is through weighted Lebesgue-type (or rearrangement-invariant) inequalities for the Fourier transform $\hat{f}$ on \( \mathbb{R}^n \). For instance, the following types of estimates:
\[
\begin{cases}
\|u\hat{f}\|_{L_q} \lesssim \|f \, v\|_{L_p}, & \text{if}\, u\,\text{and}\,\ v^{-1} \text{ are even and non-increasing weights}, \\
\|\hat{f}\|_{X} \lesssim \|f\|_Y, & \text{for rearrangement-invariant spaces } X\,\text{and}\, Y,
\end{cases}
\]
imply the corresponding operator norm inequalities:
\[
\begin{cases}
\|uT(f)\|_{L_q} \lesssim \|f \, v\|_{L_p},  \\
\|T(f)\|_{X} \lesssim \|f\|_Y, 
\end{cases}
\]
for any operator \( T \) satisfying the endpoint bounds:
\[
\|T(f)\|_{L_\infty} \lesssim \|f\|_{L_1} 
\quad \text{and} \quad 
\|T(f)\|_{L_2} \lesssim \|f\|_{L_2}.
\]

We refer to recent work~\cite{ST} for an application of this method to establish optimal Hausdorff--Young-type inequalities even in limiting cases. In fact, the authors address and resolve the Herz-Bochkarev problem. Thus, the following optimal estimate was established in \cite{ST}:
\[
\left( \sum_{n=1}^{\infty}  \frac{1}{n \log^{q/2}(n+1)} \left(\sum_{k=1}^{n} |a_k|^2 \right)^{q/2} \right)^{1/q} \lesssim \|f\|_{L_{2,q}}, \quad 2 < q \leq \infty.
\]

In the present paper, first we revisit the Hausdorff--Young inequality in Lorentz spaces, emphasizing how the constant in this optimal inequality behaves under variations of the parameter \( p \):
\begin{theorem}\label{T-e}
Let \( 1 < p < 2 \) and \( f \in L_{p,q} \). Then 
\begin{equation}\label{HYL}
\|a\|_{\ell_{p',q}} \leq  c \frac{1}{\left( \frac{1}{p} - \frac{1}{2} \right)^{\frac{1}{2}}} \|f\|_{L_{p,q}} , \quad 0 < q \le \infty,
\end{equation}
where \(c\) is independent of \(p\) and \(a= \{a_k\}_{k=1}^{\infty}\) are the corresponding Fourier coefficients of \( f \) with respect to an arbitrary given orthonormal system.
\end{theorem}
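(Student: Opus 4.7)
The plan is to first establish a pointwise bound on the decreasing rearrangement $a^*_n$ of the Fourier coefficients via a $K$-functional argument, and then to apply a precisely chosen weighted Hardy-type estimate that produces the sharp factor $(1/p - 1/2)^{-1/2}$. Assume the orthonormal system is uniformly bounded (so that $|a_k(h)| \leq \|h\|_{L_1}$ for all $h$), and use Bessel's inequality $\|a\|_{\ell_2} \leq \|f\|_{L_2}$. For any decomposition $f = g + h$ with $g \in L_2$ and $h \in L_1$, Bessel applied to $g$ gives $a^*_n(g) \leq n^{-1/2}\|g\|_{L_2}$ and directly $a^*_n(h) \leq \|h\|_{L_1}$. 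Using $a^*_{2n-1}(f) \leq a^*_n(g) + a^*_n(h)$ and minimizing over decompositions gives $a^*_{2n}(f) \leq K(n^{-1/2}, f; L_1, L_2)$, so by Holmstedt's formula
\[
a^*_n(f) \lesssim \int_0^{1/n} f^*(s)\,ds + n^{-1/2}\Bigl(\int_{1/n}^1 (f^*(s))^2\,ds\Bigr)^{1/2}.
\]

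Next I use the equivalence $\|a\|_{\ell_{p', q}}^q \asymp \sum_n n^{q/p' - 1}(a^*_n)^q$ to decompose the right-hand side into two pieces $S_A$ and $S_B$, one from each term above. The sum $S_A$ is handled by the classical weighted Hardy inequality, giving $S_A^{1/q} \leq p' \|f\|_{L_{p,q}}$. The decisive piece is $S_B$: after the substitution $t = 1/n$,
\[
S_B \asymp \int_0^1 t^{\beta - 1} H(t)^{q/2}\,dt, \quad H(t) := \int_t^1 (f^*(s))^2\,ds, \quad \beta := q\bigl(\tfrac{1}{p} - \tfrac{1}{2}\bigr).
\]
Integration by parts using $H'(t) = -(f^*(t))^2$, justified via approximation to kill boundary terms, yields
\[
S_B = \frac{q}{2\beta}\int_0^1 t^\beta H(t)^{q/2 - 1}(f^*(t))^2\,dt.
\]
For $q > 2$, H\"older's inequality with exponents $q/(q-2)$ and $q/2$, combined with the splitting $t^\beta = t^{\beta_1} t^{\beta_2}$ with $\beta_1 q/(q-2) = \beta - 1$ and $\beta_2 q/2 = q/p - 1$---a system consistent precisely because $\beta = q(1/p - 1/2)$---bounds the right-hand side by $S_B^{(q-2)/q}\|f\|_{L_{p, q}}^2$. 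Rearranging gives
\[
S_B^{1/q} \leq \Bigl(\tfrac{1}{2(1/p - 1/2)}\Bigr)^{1/2}\|f\|_{L_{p,q}},
\]
a bound independent of $q$, and combining with the bound on $S_A$ produces the claimed blow-up.

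The main obstacle is the H\"older step: a naive application of weighted Hardy's inequality would introduce a suboptimal factor $(q/\beta)^{1/2}$, and only the integration-by-parts-then-H\"older strategy with the matched exponents makes the $q$-dependence cancel, leaving exactly $(1/p - 1/2)^{-1/2}$. Extending the argument to the full range $0 < q \leq \infty$ requires additional care: the case $q = \infty$ follows by replacing integrals with suprema (with the same constant), the case $q = 2$ reduces to a direct Fubini swap (yielding the identical answer), and the case $0 < q < 2$ requires a modified H\"older step or a detour through the grand Lorentz scale referenced in the abstract. A final technicality is that the $S_A$ estimate carries a factor $p'$ that degrades as $p \to 1$, but in that range $(1/p - 1/2)^{-1/2}$ is bounded below, so one can patch the argument using the classical Hausdorff--Young inequality in Lorentz spaces for $p$ bounded away from $2$.
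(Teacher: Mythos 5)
For the range \(2\le q\le\infty\) your argument is correct and, despite the different packaging, is essentially the paper's proof: your \(K\)-functional/Holmstedt bound \(a_n^*\lesssim \int_0^{1/n}f^*(s)\,ds+n^{-1/2}\bigl(\int_{1/n}^1 (f^*(s))^2\,ds\bigr)^{1/2}\) is exactly what the paper's explicit truncation \(f=f_0+f_1\) at height \(f^*(2^{-k})\) produces (Parseval/Bessel for the bounded part, \(|a_m|\le\|f_0\|_{L_1}\) for the peak), and your integration-by-parts-plus-H\"older treatment of \(S_B\) re-derives, with the same constant \(\bigl(2(\tfrac1p-\tfrac12)\bigr)^{-1/2}\), the Hardy-type inequality the paper proves via Minkowski (Proposition \ref{Hardy-Se} with \(r=2\), \(\alpha=\tfrac1p-\tfrac12\)); I checked your exponent bookkeeping and it is consistent. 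Two remarks on the comparison: the paper estimates the stronger averaged quantity \(\bigl(\tfrac1k\sum_{m\le k}(a_m^*)^2\bigr)^{1/2}\) (the \(\ell^{*}_{p',q}\) norm), which is what it needs later for the grand-space Theorems \ref{0.3}--\ref{Best-LB}; your pointwise bound on \(a_n^*\) yields only the stated \(\ell_{p',q}\) inequality, which suffices for this theorem but not for that later use. Your explicit patch near \(p=1\) (where the factor \(p'\) from \(S_A\) degrades) is sensible, and you are right that one must read ``arbitrary orthonormal system'' as uniformly bounded, since both you and the paper use \(|a_m(h)|\le\|h\|_{L_1}\).

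The genuine gap is the range \(0<q<2\), which the statement includes but you do not prove. Your H\"older step uses exponents \(q/(q-2)\) and \(q/2\), hence needs \(q>2\); the Fubini identity covers only \(q=2\); and the Minkowski-type Hardy inequality with constant \((r\alpha)^{-1/r}\) requires \(q\ge r=2\). Saying this case ``requires a modified H\"older step or a detour through the grand Lorentz scale'' is not an argument, and the case is not a routine extension: the paper devotes a separate lemma (Lemma \ref{lemma2}) to it, replacing the Hardy/Minkowski step by Jensen's inequality \(\bigl(\sum_m b_m\bigr)^{q/2}\le\sum_m b_m^{q/2}\) for \(q\le2\) (and a further Jensen step for the \(L_1\)-piece when \(q<1\), where your appeal to the ``classical weighted Hardy inequality'' for \(S_A\) also breaks down, since that inequality needs \(q\ge1\)). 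Moreover, even the paper's argument for \(q<2\) produces only the larger factor \(\bigl(\tfrac1p-\tfrac12\bigr)^{-1/q}\) rather than \(\bigl(\tfrac1p-\tfrac12\bigr)^{-1/2}\), so the sub-\(2\) range is precisely where the difficulty (and the change in the constant's behavior) sits; your proposal leaves it unaddressed, and any completion would have to either accept the weaker exponent there or supply a genuinely new estimate.
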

Inequality~\eqref{HYL} is an attempt to introduce a scalar formulation that confirms Bochkarev's counterexample from \cite{Boch} in the limiting case \( p \rightarrow 2 \). Note that this limiting case problem was first studied by Herz in \cite{Herz}. For detailed discussions, we refer to \cite{DoVer} and \cite{ST}.

To derive the limiting case results, the main ingredient of our argument is the construction of the so-called grand Lorentz space and is denoted by \(G^{\theta}L_{p,q}:=G^{\theta}L_{p,q}([0,1])\), see Definition \ref{grand Lorentz space}, which enlarges the Lorentz space with respect to the parameter \( p \) which seems an effective framework for recovering improved inequalities at critical endpoints.  We prove the following theorems in the setting of grand Lorentz spaces. 

\begin{theorem}\label{0.3}
Let \( 2 \le q \le \infty \), \( \theta \ge 0 \), \(\theta_1=\theta+\frac{1}{2}\) and \(f \in G^{\theta}L_{2,q}\). Then  
\[
\|a\|_{G^{\theta_1}\ell^{*}_{2,q}} \lesssim \|f\|_{G^{\theta}L_{2,q}}.
\]
In particular,  
\begin{equation}
\|a\|_{G^{\frac{1}{2}}\ell^{*}_{2,q}} \lesssim \|f\|_{L_{2,q}}.
\end{equation}
Here, \(a= \{a_k\}_{k=1}^{\infty}\) are the corresponding Fourier coefficients of \( f \) with respect to an arbitrary given orthonormal system and the grand sequence spaces $G^{\theta_{1}}\ell^{*}_{2,q}$ are defined by \eqref{grandseqspace}.
\end{theorem}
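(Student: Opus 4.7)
The plan is to lift the sharp-constant Hausdorff--Young inequality of Theorem~\ref{T-e} from a fixed $p<2$ to the grand scale by weighting both sides with an appropriate power of $1/p-1/2$ and taking a supremum in $p$. The constant in Theorem~\ref{T-e} grows precisely like $(1/p-1/2)^{-1/2}$ as $p\uparrow 2$, and the choice $\theta_{1}=\theta+\tfrac12$ on the sequence side is engineered exactly to absorb this blow-up.

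First I would fix a convenient parametrization. Setting $\alpha=1/p-1/2$, so that $\alpha\searrow 0$ corresponds to $p\uparrow 2$, Definition~\ref{grand Lorentz space} together with the identification \eqref{grandseqspace} should give
\[
\|f\|_{G^{\theta}L_{2,q}}\;\asymp\;\sup_{0<\alpha<\alpha_{0}}\alpha^{\theta}\,\|f\|_{L_{p(\alpha),q}},\qquad
\|a\|_{G^{\theta_{1}}\ell^{*}_{2,q}}\;\asymp\;\sup_{0<\alpha<\alpha_{0}}\alpha^{\theta_{1}}\,\|a\|_{\ell_{p(\alpha)',q}},
\]
where $p(\alpha)=2/(1+2\alpha)$. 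For each fixed $\alpha\in(0,\alpha_{0})$, Theorem~\ref{T-e} yields $\|a\|_{\ell_{p(\alpha)',q}}\le c\,\alpha^{-1/2}\,\|f\|_{L_{p(\alpha),q}}$, with $c$ independent of $\alpha$. Multiplying by $\alpha^{\theta_{1}}=\alpha^{\theta+1/2}$ cancels the $\alpha^{-1/2}$ factor, producing
\[
\alpha^{\theta_{1}}\,\|a\|_{\ell_{p(\alpha)',q}}\;\le\; c\,\alpha^{\theta}\,\|f\|_{L_{p(\alpha),q}},
\]
and a supremum in $\alpha$ on both sides delivers the main inequality. The particular case $\theta=0$ then follows by further observing that, on the finite-measure set $[0,1]$, one has the continuous embedding $L_{2,q}\hookrightarrow L_{p,q}$ for $p$ close to $2$, with a constant that stays bounded as $p\uparrow 2$; this gives $\|f\|_{G^{0}L_{2,q}}\lesssim\|f\|_{L_{2,q}}$.

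The hard part will be verifying that the two grand norms are indeed captured by these parametric-$\alpha$ suprema in a form compatible with the pointwise estimate: one must check that the weight $(1/p-1/2)^{\theta}$ intrinsic to the grand Lorentz definition translates cleanly into the factor $\alpha^{\theta}$ on the function side, and that the analogous identification on the sequence side produces the shift $\theta_{1}=\theta+\tfrac12$ without hidden logarithmic corrections that could defeat the absorption. Once this comparison is in place, the rest is essentially a one-line supremum bound, so the real content lies in the consistency of the function-side and sequence-side grand scales rather than in any deeper analytic estimate.
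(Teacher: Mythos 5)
Your overall strategy (parametrize by $\alpha=\tfrac1p-\tfrac12$, apply the sharp-constant Hausdorff--Young bound, multiply by $\alpha^{\theta+\frac12}$, take a supremum) is the same as the paper's, but the step you defer as ``the hard part'' is not a bookkeeping check: as you have set it up, it is false. The gap is your sequence-side identification
\[
\|a\|_{G^{\theta_{1}}\ell^{*}_{2,q}}\;\asymp\;\sup_{0<\alpha<\alpha_{0}}\alpha^{\theta_{1}}\,\|a\|_{\ell_{p(\alpha)',q}}.
\]
The grand space \eqref{grandseqspace} is built from the Hardy average $\bigl(\tfrac1k\sum_{m\le k}(a_m^*)^{2}\bigr)^{1/2}$, not from $a_k^*$; passing from the plain Lorentz norm $\ell_{p',q}$ to this averaged norm at a fixed $p$ is precisely a Hardy inequality (the discrete analogue of Proposition~\ref{Hardy-1} with $r=2$, $\alpha=\tfrac1p-\tfrac12$), whose constant is $\asymp\bigl(\tfrac1p-\tfrac12\bigr)^{-1/2}$ and blows up as $p\uparrow 2$ --- exactly the half power your weight $\alpha^{\theta+\frac12}$ was supposed to absorb. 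The supremum structure does not rescue the identification: for $q=\infty$, $\theta=0$, $\theta_1=\tfrac12$, take $a_k^{*}=k^{-1/2}\bigl(\log(k+2)\bigr)^{1/2}$. Then $\sup_k k^{-\alpha}(\log(k+2))^{1/2}\asymp\alpha^{-1/2}$, so $\sup_{\alpha}\alpha^{1/2}\|a\|_{\ell_{p(\alpha)',\infty}}<\infty$, whereas $\sum_{m\le k}(a_m^*)^2\asymp(\log k)^2$ gives $\sup_k k^{-\alpha}\log k\asymp\alpha^{-1}$ and hence $\|a\|_{G^{1/2}\ell^{*}_{2,\infty}}=\sup_{\alpha}\alpha^{1/2}\cdot\alpha^{-1}=\infty$. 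So the $\lesssim$ direction of your equivalence fails, and inserting the Hardy constant instead would only yield $\theta_1=\theta+1$.

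What is actually needed --- and what the paper proves --- is the sharp-constant inequality directly for the \emph{averaged} norm: Lemma~\ref{lemma} states $\|a\|_{\ell^{*}_{p',q}}\le c\,(\tfrac1p-\tfrac12)^{-1/2}\|f\|_{L_{p,q}}$ for $2\le q\le\infty$, with $c$ independent of $p$ (its proof, via the cut $f=f_0+f_1$ at level $f^{*}(2^{-k})$, an $L_1$ bound for $f_0$, Parseval for $f_1$, and Hardy's inequality, is where the analytic content lies; this is not a formal consequence of Theorem~\ref{T-e} with the plain norm, uniformly in $p$). Once you use this averaged form, the sequence-side identification becomes an exact identity: with $\tfrac1{p'}=\tfrac12-\varepsilon$ one has $k^{1/p'}\bigl(\tfrac1k\sum_{m\le k}(a_m^*)^2\bigr)^{1/2}=k^{\frac12-\varepsilon}\bigl(\tfrac1k\sum_{m\le k}(a_m^*)^2\bigr)^{1/2}$, which is literally the expression in \eqref{grandseqspace}, just as $\|f\|_{L_{p(\varepsilon),q}}$ is literally the inner expression of $\|f\|_{G^{\theta}L_{2,q}}$; no equivalence constants enter, and then your one-line supremum argument (and your trivial observation $\|f\|_{G^{0}L_{2,q}}\le\|f\|_{L_{2,q}}$, which indeed follows from $t\le1$) completes the proof exactly as in the paper.
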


 \begin{theorem} \label{T-2-2}
Let \( 0 < q < 2 \), \(  \theta \ge 0\), \( \theta_1 = \theta + \frac{1}{q} \) and $f\in G^{\theta}L_{2,q}$. Then  
\[
\|a\|_{G^{\theta_1}\ell^{*}_{2,q}} \lesssim \|f\|_{G^{\theta}L_{2,q}}.
\]

Here, \(a= \{a_k\}_{k=1}^{\infty}\) are the corresponding Fourier coefficients of \( f \) with respect to an arbitrary given orthonormal system and  the grand sequence spaces $G^{\theta_{1}}\ell^{*}_{2,q}$ are defined by \eqref{grandseqspace}.
\end{theorem}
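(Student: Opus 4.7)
The plan is to parallel the proof of Theorem \ref{0.3}, parametrizing by a small $\epsilon > 0$, applying a Hausdorff--Young type estimate at the shifted exponent $p = 2 - \epsilon$, and then recovering the grand-space norms by passing the resulting pointwise-in-$\epsilon$ bound through the functionals defining the grand spaces. The only substantive difference with the large-$q$ case is that, for $0 < q < 2$, the natural operator norm $L_{p,q} \to \ell_{p',q}$ blows up like $\epsilon^{-1/q}$ near $p = 2$, which is what produces the larger defect $\theta_1 - \theta = 1/q > 1/2$.

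First, fix $\epsilon \in (0, 1)$ and put $p = 2 - \epsilon$, so that $1/p - 1/2 = \epsilon / (2(2 - \epsilon)) \asymp \epsilon$ as $\epsilon \to 0^+$. The key input is the Hausdorff--Young-type estimate
\[
\|a\|_{\ell_{p',q}} \;\lesssim\; \epsilon^{-1/q}\,\|f\|_{L_{p,q}}, \qquad 0 < q < 2,
\]
which is the analogue of Theorem \ref{T-e} adapted to the regime $q < 2$. I would establish this by real interpolation (the Marcinkiewicz / $K$-functional method) between Parseval's identity $L_2 \to \ell_2$ (operator norm $1$) and the trivial bound $L_1 \to \ell_\infty$ (operator norm $1$). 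At the interpolation parameter $t = 2(1/p - 1/2) \asymp \epsilon$, the standard $K$-method constant scales like $\bigl(t(1 - t)\bigr)^{-1/q} \asymp \epsilon^{-1/q}$, giving precisely the claimed $q$-dependence.

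Combining this estimate with the defining inequality of the grand Lorentz space (Definition \ref{grand Lorentz space}), namely $\|f\|_{L_{2-\epsilon,q}} \lesssim \epsilon^{-\theta}\,\|f\|_{G^{\theta}L_{2,q}}$, yields, uniformly in $\epsilon \in (0,1)$,
\[
\epsilon^{\theta + 1/q}\,\|a\|_{\ell_{(2-\epsilon)',q}} \;\lesssim\; \|f\|_{G^{\theta}L_{2,q}}.
\]
Applying to both sides the functional in $\epsilon$ that defines $\|\cdot\|_{G^{\theta_1}\ell^{*}_{2,q}}$ via \eqref{grandseqspace} (supremum, or the appropriate integral, over $\epsilon$) with $\theta_1 = \theta + 1/q$ delivers the announced inequality. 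The assumption $\theta \ge 0$ is what keeps the resulting weight on the left-hand side compatible with the grand sequence-space functional.

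The main technical obstacle is the first step: securing the sharp $\epsilon^{-1/q}$-scaling in the Hausdorff--Young inequality for $0 < q < 2$, with a constant that is independent of $p$ as $p \to 2^-$. Theorem \ref{T-e} provides a $q$-uniform bound of order $\epsilon^{-1/2}$, which is not sufficient here since $1/q > 1/2$; one has to track the exact $q$-dependence hidden inside the real-interpolation constant. Once that interpolation lemma is stated cleanly, the substitution into the grand-space definitions is a routine calculation and the theorem follows.
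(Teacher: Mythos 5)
Your reduction of the theorem to an $\varepsilon$-dependent Hausdorff--Young bound is exactly the paper's: apply the estimate at $\tfrac1p=\tfrac12+\varepsilon$, multiply by $\varepsilon^{\theta_1}$, absorb $\varepsilon^{\theta}$ into $\|f\|_{G^{\theta}L_{2,q}}$, and take the supremum in $\varepsilon$. The gap is that the decisive input --- the estimate $\|a\|_{\ell^{*}_{p',q}}\lesssim \bigl(\tfrac1p-\tfrac12\bigr)^{-1/q}\|f\|_{L_{p,q}}$ for $0<q<2$ with constant independent of $p$, which is the paper's Lemma \ref{lemma2} --- is precisely what you leave unproven, and the one quantitative justification you offer for it is mislocated. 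The exact real interpolation theorem gives operator norm $M_0^{1-\eta}M_1^{\eta}$; there is no factor $\bigl(t(1-t)\bigr)^{-1/q}$ coming from ``the standard $K$-method constant.'' The blow-up lives entirely in the norm equivalences at the two ends: by Holmstedt's formulas, $(\ell_\infty,\ell_2)_{\eta,q}$ agrees with the averaged quasi-norm $\ell^{*}_{p',q}$ (the one entering \eqref{grandseqspace}) with constants bounded as $p\to2$, whereas the embedding $L_{p,q}\hookrightarrow (L_1,L_2)_{\eta,q}$ reduces to the weighted Hardy inequality $\bigl(\int_0^1 \bigl(u^{\varepsilon}\bigl(\int_u^1 (f^*)^2\,ds\bigr)^{1/2}\bigr)^q\,\tfrac{du}{u}\bigr)^{1/q}\lesssim C(\varepsilon,q)\,\|f\|_{L_{p,q}}$. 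For $q\ge2$ Proposition \ref{Hardy-Se} (with $r=2$, $\alpha=\varepsilon$) gives $C\asymp\varepsilon^{-1/2}$, but for $q<2$ that proposition is unavailable (it needs $r\le q$), and the constant is genuinely of order $\varepsilon^{-1/q}$ (test $f^*=\chi_{(0,1)}$). Supplying this case is exactly the content of Lemma \ref{lemma2}, which the paper proves directly via the truncation $f=f_0+f_1$ at height $f^*(2^{-k})$ --- a near-optimal decomposition for $K(\cdot,f;L_1,L_2)$, so your interpolation route and the paper's argument are morally the same --- together with a Jensen-type summation replacing Hardy. Your plan is viable, but as written the $\varepsilon^{-1/q}$ scaling rests on an incorrect attribution and still has to be proved.

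Two smaller points. Your remark that Theorem \ref{T-e} gives ``a $q$-uniform bound of order $\epsilon^{-1/2}$, which is not sufficient here since $1/q>1/2$'' is backwards: since $\varepsilon^{-1/2}\le\varepsilon^{-1/q}$ for $\varepsilon\in(0,1)$ and $q<2$, such a bound would be more than sufficient and would even yield the stronger conclusion $\theta_1=\theta+\tfrac12$; the actual point is that for $q<2$ only the weaker rate $\varepsilon^{-1/q}$ is established (Lemma \ref{lemma2}), which is why the theorem carries $\theta_1=\theta+\tfrac1q$. Finally, your endpoint $L_1\to\ell_\infty$ with norm $1$ presupposes a uniformly bounded orthonormal system; the paper's own proof makes the same implicit assumption (via $|a_m(f)|\le\|f\|_{L_1}$), so this is a shared caveat rather than a defect specific to your proposal, and in both arguments the grand-norm range $\varepsilon\in[\tfrac12,1)$ is covered only after a trivial monotonicity reduction to small $\varepsilon$.
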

Theorem \ref{0.3} refines Bochkarev's result. In \cite{Boch, Boch-1}, Bochkarev established the following lower bound for functions \( f \in L_{2,q}([0,1]) \):
\[
\frac{1}{ (\log~ (n+1))^{\frac{1}{2}-\frac{1}{q}}}  \left(\sum_{k=1}^n (a^*_k)^2 \right)^{\frac{1}{2}} \lesssim \|f\|_{L_{2,q}}, \quad 2 < q \le \infty,
\]
where \( \{a_k^*\}_{k=1}^{\infty} \) is the non-increasing rearrangement of the sequence \( \{|a_k|\}_{k=1}^{\infty} \).
This estimate is sharp in the sense that the exponent in the logarithmic term is optimal: it cannot be improved. More precisely, for any \( \epsilon > 0 \), there exists a function \( f \in L_{2,q} \) such that the corresponding sum with the logarithmic factor replaced by \( (\log (n+1))^{\frac{1}{2} - \frac{1}{q} - \epsilon} \) diverges. That is, any attempt to strengthen the inequality by reducing the logarithmic power will necessarily exclude some functions in \( L_{2,q} \). In contrast, our lower bound, in  Theorem \ref{0.3}, satisfies the following inequality:
\begin{equation*}
 \sup\limits_{n}\frac{1}{ (\log~( n+1))^{\frac{1}{2}-\frac{1}{q}}}  \left(\sum_{k=1}^n (a^*_k)^2 \right)^{\frac{1}{2}} \lesssim \|a\|_{G^{\frac{1}{2}}\ell^*_{2,q}}, \quad 2 < q \le \infty,
\end{equation*}
which shows that the norm of the Fourier coefficients in the grand Lorentz space \( G^{\frac{1}{2}}\ell^*_{2,q} \) dominates the  expression appearing in Bochkarev's lower bound, implying that our result provides an improved lower sharp bound in this critical case, see Remark \ref{ORBB}.

Moreover, we further refine these limiting case inequalities combining the grandization and interpolation techniques. To this end, we establish the following interpolation theorem for grand Lorentz sequence spaces.

\begin{theorem}\label{interpolation}
Let \(1\le p < q_0 < q_1 \le \infty\) be such that 
\[
\frac{1}{q}=\frac{1-\eta}{q_0} + \frac{\eta}{q_1}, \qquad 0<\eta <1.
\]
Then,
\begin{equation}\label{Inter}
(G^{\frac{1}{p}}\ell^{*}_{p,q_0} , G^{\frac{1}{p}}\ell^{*}_{p,q_1} )_{\eta \tau} \hookrightarrow \Lambda_{p,q,\tau},  \quad 0<\tau \le \infty.
\end{equation}

\end{theorem}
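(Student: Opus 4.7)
The plan is to prove the embedding via the $K$-method of real interpolation. Since
\[
\|a\|_{(G^{1/p}\ell^*_{p,q_0},\, G^{1/p}\ell^*_{p,q_1})_{\eta,\tau}} = \left(\int_0^\infty \bigl[t^{-\eta} K(t,a)\bigr]^\tau \,\frac{dt}{t}\right)^{1/\tau},
\]
where $K(t,a) := K(t,a; G^{1/p}\ell^*_{p,q_0},\, G^{1/p}\ell^*_{p,q_1})$, what I must produce is a pointwise lower bound on $K(t,a)$ whose weighted $L_\tau$ norm in $t$ recovers (up to absolute constants) $\|a\|_{\Lambda_{p,q,\tau}}$.

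First I would unpack the grand Lorentz sequence norms from Definition~\ref{grand Lorentz space}: each $\|\cdot\|_{G^{1/p}\ell^*_{p,q_j}}$ is a supremum over a small perturbation parameter $\epsilon$ of a weighted Lorentz sequence norm whose secondary index is $q_j$. Since the grandization exponent $1/p$ and the primary index $p$ are common to both endpoints, the pair differs only in the secondary index $q_j$, and Holmstedt's formula applied at each fixed $\epsilon$ yields the model estimate
\[
K\bigl(t,a;\text{perturbed }\ell^*_{p,q_0},\,\text{perturbed }\ell^*_{p,q_1}\bigr) \asymp \left(\int_0^{\varphi(t)} [s^{1/p} a^*(s)]^{q_0} \tfrac{ds}{s}\right)^{1/q_0} + t\left(\int_{\varphi(t)}^\infty [s^{1/p} a^*(s)]^{q_1} \tfrac{ds}{s}\right)^{1/q_1},
\]
for a balance function $\varphi(t)$ determined by the difference $\frac{1}{q_0}-\frac{1}{q_1}$.

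Second, I convert this model estimate into a lower bound on $K(t,a)$ for the grand pair itself. The pointwise rearrangement majorization $a^*(2s) \le a_0^*(s) + a_1^*(s)$, valid for any decomposition $a=a_0+a_1$, lets me replace $a^*$ by decomposition pieces up to a factor of $2$; choosing $\epsilon=\epsilon(t)$ inside each grand norm so as to balance the two halves of the Holmstedt split then produces an explicit functional $\Phi(t,a)$ depending only on $a^*$ with
\[
\Phi(t,a) \lesssim K\bigl(t,a;\, G^{1/p}\ell^*_{p,q_0},\, G^{1/p}\ell^*_{p,q_1}\bigr).
\]
A change of variables $s=s(t)$ together with standard Hardy-type inequalities then gives
\[
\left(\int_0^\infty [t^{-\eta}\Phi(t,a)]^\tau \,\frac{dt}{t}\right)^{1/\tau} \asymp \|a\|_{\Lambda_{p,q,\tau}},
\]
where the identity $\tfrac{1}{q} = \tfrac{1-\eta}{q_0} + \tfrac{\eta}{q_1}$ is precisely what makes the outer weights align with the defining exponents of $\Lambda_{p,q,\tau}$.

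The main obstacle is the transfer from the fixed-$\epsilon$ Holmstedt formula to a genuine $K$-functional lower bound for the grand pair: the supremum over $\epsilon$ in the grand norm does not a priori commute with the infimum defining $K$, so one cannot simply take $\sup_\epsilon$ of both sides. To circumvent this, I plan to exhibit near-optimal decompositions of $a$ built by truncating the non-increasing rearrangement $a^*$ at a level tied to a carefully chosen $\epsilon=\epsilon(t)$, producing simultaneous upper bounds on $\|a_0\|_{G^{1/p}\ell^*_{p,q_0}}$ and $t\|a_1\|_{G^{1/p}\ell^*_{p,q_1}}$ that match the Holmstedt expression up to a multiplicative constant. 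Once this $K$-functional estimate is secure, the passage to the $\Lambda_{p,q,\tau}$ norm is a routine Hardy-inequality computation.
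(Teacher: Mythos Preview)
Your overall architecture---lower-bound the $K$-functional pointwise by a functional of $a^*$, then integrate---is correct, and the ingredients in your second paragraph (rearrangement subadditivity for an \emph{arbitrary} decomposition, plus a scale-dependent choice of the grandization parameter $\varepsilon$) are exactly the ones the paper uses. The gap is in your final paragraph, where the logic reverses direction.

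The ``obstacle'' you name is a phantom for the inequality you need. Since the grand norm is a \emph{supremum} over $\varepsilon$, one always has $\|a_j\|_{G^{1/p}\ell^*_{p,q_j}} \ge \varepsilon^{1/p}\,\|a_j\|_{\text{perturbed},\,\varepsilon}$ for \emph{every} fixed $\varepsilon$; hence for every decomposition $a=a_0+a_1$ and every $\varepsilon$,
\[
\|a_0\|_{G^{1/p}\ell^*_{p,q_0}} + t\,\|a_1\|_{G^{1/p}\ell^*_{p,q_1}}
\;\ge\; \varepsilon^{1/p}\bigl(\|a_0\|_{\text{pert},\varepsilon,q_0} + t\,\|a_1\|_{\text{pert},\varepsilon,q_1}\bigr),
\]
and taking the infimum over decompositions gives $K_{\text{grand}}(t,a)\ge \varepsilon^{1/p}K_{\text{pert},\varepsilon}(t,a)$ for each $\varepsilon$. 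No commutation of $\sup_\varepsilon$ with $\inf$ is required. Your proposed remedy---exhibiting near-optimal decompositions by truncation---yields \emph{upper} bounds on $K$, which is the wrong side for the embedding $(\,\cdot\,)_{\eta,\tau}\hookrightarrow\Lambda_{p,q,\tau}$.

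The paper bypasses Holmstedt altogether. It first isolates the elementary bound (Lemma~\ref{ImBoch})
\[
\sup_{s>0}\, s^{1/r}\Bigl(\tfrac{1}{s}\textstyle\sum_{m=1}^{2^s}(a_m^*)^p\Bigr)^{1/p} \;\lesssim\; \|a\|_{G^{1/p}\ell^*_{p,r}},
\]
proved by restricting the defining sum to $k\ge n=2^s$ and picking $\varepsilon\asymp 1/\ln n$. Applying this with $r=q_0$ to $a_0$ and $r=q_1$ to $a_1$, together with subadditivity of $\bigl(\sum(a_m^*)^p\bigr)^{1/p}$, gives directly
\[
k^{1/q}\sup_{s\ge k}\Bigl(\tfrac{1}{s}\textstyle\sum_{m=1}^{2^s}(a_m^*)^p\Bigr)^{1/p}
\;\lesssim\; k^{-\eta(1/q_0-1/q_1)}\,K\bigl(k^{1/q_0-1/q_1},\,a\bigr),
\]
after which a dyadic discretisation of the interpolation norm finishes the argument. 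No balance function $\varphi(t)$, no Holmstedt split, and no Hardy inequality are needed: the $\Lambda_{p,q,\tau}$-norm already has the supremum over $s\ge k$ built in, and Lemma~\ref{ImBoch} is designed to produce exactly that quantity.
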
 

Here, $\Lambda_{p,q,\tau}$ are the sequence spaces given in \eqref{sequencespace} and the grand sequence spaces $G^{\theta}\ell^{*}_{p,q}$ are defined by \eqref{grandseqspace}.

As our final result, we employ the preceding interpolation theorem to derive the following statement, which, in a particular case, yields a version of the Herz–Bochkarev inequality featuring the best known sharp lower bound to the best of our knowledge (cf. \cite{ST}). 
\begin{theorem}\label{Best-LB}
 Let \(2<q  \le \infty\) and \(f \in L_{2,q,\tau}\). Then
 \[
 \|a\|_{\Lambda_{2,q,\tau}} \lesssim \|f\|_{L_{2,q, \tau}},  \quad 0<\tau \le \infty, 
 \]
 where \(a= \{a_k\}_{k=1}^{\infty}\) are the corresponding Fourier coefficients of \( f \) with respect to an arbitrary given orthonormal system and $L_{2,q, \tau}$ is defined by \eqref{Lpqtau}. In particular case, when \(\tau=q\), we have 
\begin{equation}
\left(\sum_{k=0}^{\infty} \left(\sup_{s \ge k } \left( \frac{1}{s} \sum_{m=1}^{ 2^s } (a_m^*)^2 \right)^{\frac{1}{2}} \right)^{q} \right)^{\frac{1}{q}}  \lesssim \|f\|_{L_{2,q}}.
\end{equation}

\end{theorem}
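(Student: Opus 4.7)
The plan is to combine Theorem \ref{0.3} (an endpoint estimate for the Fourier coefficient map on grand Lorentz spaces) with the embedding provided by Theorem \ref{interpolation}, passing between the two via real interpolation. Concretely, let $T$ denote the linear operator sending $f$ on $[0,1]$ to its Fourier coefficient sequence $a = \{a_k\}_{k=1}^{\infty}$ with respect to the given orthonormal system. Theorem \ref{0.3} supplies the family of endpoint bounds
\[
T \colon L_{2,q_{0}} \longrightarrow G^{1/2}\ell^{*}_{2,q_{0}}, \qquad 2 < q_{0} \le \infty.
\]

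Given $2 < q \le \infty$ and $0 < \tau \le \infty$, I would pick exponents $2 < q_{0} < q < q_{1} \le \infty$ and $\eta \in (0,1)$ with $\tfrac{1}{q} = \tfrac{1-\eta}{q_{0}} + \tfrac{\eta}{q_{1}}$, and apply the real interpolation functor $(\,\cdot\,,\,\cdot\,)_{\eta,\tau}$ to the two endpoint estimates above. On the input side, the space $L_{2,q,\tau}$ introduced in \eqref{Lpqtau} is (or is equivalent to) the real interpolation space $(L_{2,q_{0}}, L_{2,q_{1}})_{\eta,\tau}$ of the Lorentz couple, via the standard real interpolation identities on the Lorentz scale. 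On the output side, Theorem \ref{interpolation} with $p = 2$ embeds $(G^{1/2}\ell^{*}_{2,q_{0}}, G^{1/2}\ell^{*}_{2,q_{1}})_{\eta,\tau}$ continuously into $\Lambda_{2,q,\tau}$. Chaining these three facts yields $\|a\|_{\Lambda_{2,q,\tau}} \lesssim \|f\|_{L_{2,q,\tau}}$, which is the main inequality.

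For the specialization $\tau = q$, the interpolation parameter coincides with the second Lorentz index, so $L_{2,q,q}$ collapses to the classical Lorentz space $L_{2,q}$, while the quasi-norm on $\Lambda_{2,q,q}$ unfolds into the displayed $\ell^{q}$-sum over $k$ of $\sup_{s\ge k}\bigl(s^{-1}\sum_{m=1}^{2^{s}}(a_{m}^{*})^{2}\bigr)^{1/2}$. Substituting these two identifications into the general bound produces the concrete Herz--Bochkarev-type inequality in the statement.

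The only nontrivial work lies in the identification of $L_{2,q,\tau}$ with the interpolation space $(L_{2,q_{0}}, L_{2,q_{1}})_{\eta,\tau}$: the target side is handled by Theorem \ref{interpolation}, and the endpoint boundedness of $T$ is precisely Theorem \ref{0.3}. I expect this identification to follow directly from the definition \eqref{Lpqtau} together with a reiteration argument on the Lorentz scale, which also guarantees that the resulting quasi-norm is independent of the chosen triple $(q_{0}, q_{1}, \eta)$. A mild point to watch is that for small $\tau$ the spaces in play are only quasi-Banach, but the real $K$-method applies verbatim in that generality, so no additional difficulty arises there.
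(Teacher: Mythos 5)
Your proposal coincides with the paper's argument: the paper introduces the same coefficient operator $T$, applies Theorem \ref{0.3} at two endpoints $2<q_0<q<q_1\le\infty$, and combines Theorem \ref{interpolation} with the identity $(L_{2,q_0},L_{2,q_1})_{\eta\,\tau}=L_{2,q,\tau}$ to conclude, exactly as you outline. The one caveat is that this last identity is not a routine reiteration fact on the Lorentz scale (the first index is fixed, so the classical interpolation identities do not apply); the paper does not derive it but quotes it from \cite[Theorem 2]{Nur1997}, so your flagging it as the only nontrivial input is accurate, though the suggested derivation by reiteration is not how it is justified.
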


In particular, this lower bound is comparable to the lower bound recently established in \cite{ST}, and we have 
\[
 \sum_{k=1}^{\infty}  \frac{1}{k \log^{q/2}(k+1)} \left(\sum_{m=1}^{k} (a^{*}_m)^2  \right)^{q/2}\lesssim \sum_{k=0}^{\infty} \left(\sup_{s \ge k } \left( \frac{1}{s} \sum_{m=1}^{ 2^s } (a_m^*)^2 \right)^{\frac{1}{2}} \right)^{q}  , \quad 2 < q \leq \infty,
\]
 see Remark \ref{Sharp lower}.

This paper has a simple structure. 
In Section \ref{pre}, we introduce grand Lorentz spaces, as well as other necessary tools and notation. 
The proofs of our main results are given in Section \ref{proofs}.

\section{Preliminaries}
\label{pre}

Let \( f \) be a complex-valued measurable function in \([0,1]\), and let \( \mu \)   denote the Lebesgue measure in \([0,1]\). The distribution function \(D_{f}(\lambda)\) is given by 
\[
D_{f}(\lambda) = \mu (\{ x \in [0,1] ;\,\, |f(x)| > \lambda \}), \qquad \lambda \ge 0.
\]
\(D_{f}(\lambda)\) is non-increasing and right-continuous, and the non-increasing rearrangement \(f^{*}\) of \(f\) is defined for \(t > 0\) by 
\[
f^*(t) = \inf \{ \lambda ; \,\, D_{f}(\lambda) \le t\}.
\]
We adopt the convention that $\inf \emptyset = \infty$, which implies that $f^*(t) = \infty$ whenever $D_f > t$, for all $\lambda \geq 0$. Note that $f^*$ is decreasing and supported in $[0, 1]$.
\begin{definition}
For \( 0 < p \leq \infty \), the Lorentz quasi-norm of \( f \), denoted by \( \|f\|_{L_{p,q}} \), is defined as follows:
\[
\|f\|_{L_{p,q}} =
\begin{cases}
\left( \int_0^1 \left( t^{1/p} f^*(t) \right)^q \, \frac{dt}{t} \right)^{1/q} & \text{if } q < \infty, \\
\sup\limits_{0 < t \leq 1} \, t^{1/p} f^*(t) & \text{if } q = \infty.
\end{cases}
\]
The set of all measurable functions \( f \) for which \( \|f\|_{L_{p,q}} < \infty \) is denoted by \( L_{p,q}([0,1]) \) and is called a Lorentz space with indices \( p \) and \( q \).

Similarly, for a complex-valued sequence \( a = \{a_k\}_{k=1}^\infty \), we define the Lorentz quasi-norm as
\[
\|a\|_{\ell_{p,q}} =
\begin{cases}
\left( \sum\limits_{k=1}^\infty \left( k^{1/p}\, a_k^* \right)^q \, \frac{1}{k} \right)^{1/q} & \text{if } 0< q < \infty, \\
\sup\limits_{n \geq 1} \, n^{1/p} \, a_k^*, & \text{if } q = \infty,
\end{cases}
\]
where \( \{a_k^*\}_{k=1}^\infty \) is the non-increasing rearrangement of the sequence \( \{|a_k|\}_{k=1}^\infty \). The space of all sequences  such that \( \|a\|_{\ell_{p,q}} < \infty \) is denoted by \( \ell_{p,q} \)--the Lorentz sequence space.
\end{definition}

\begin{definition}\label{grand Lorentz space}
Let \( 0 < p \leq \infty \). The \emph{grand Lorentz space} \( G^{\theta}L_{p,q}([0,1]) \) consists of all measurable functions \( f \) for which the following quasi-norm is finite (see \cite{NHS}):
\[
\|f\|_{G^{\theta}L_{p,q}([0,1])} =
\begin{cases}
\displaystyle\sup_{0<\varepsilon<1} \, \varepsilon^{\theta} 
\left( \int_0^1 \left( t^{\frac{1}{p} + \varepsilon} f^*(t) \right)^q \frac{dt}{t} \right)^{\frac{1}{q}} 
& \text{if } \theta \geq 0,\; 0< q < \infty,\, \\[10pt]
\displaystyle\sup_{0<\varepsilon<1} \, \varepsilon^{\theta} 
\sup_{0 < t \leq 1} \, t^{\frac{1}{p} + \varepsilon} f^*(t) 
& \text{if } \theta \geq 0,\; q = \infty.
\end{cases}
\]

Similarly, for \( \theta \geq 0 \), \( 0 < p \leq \infty \), and \( 0 < q \leq \infty \), the \emph{grand Lorentz sequence space} \( G^{\theta}\ell_{p,q} \) is defined by:
\[
\| a \|_{G^{\theta}\ell_{p,q}} = \sup_{0<\varepsilon<1} \, \varepsilon^{\theta} \left( \sum_{k=1}^\infty \left(k^{\frac{1}{p} - \varepsilon} a_k^*\right)^q \,   \frac{1}{k}\right)^{1/q}, \; 0<q<\infty,
\]
and
\[
\| a \|_{G^{\theta}\ell_{p,\infty}} = \sup_{0<\varepsilon<1} \sup_{k \geq 1} \, \varepsilon^{\theta} \, k^{\frac{1}{p} - \varepsilon} \, a_k^* 
\]
for $q = \infty$.
\end{definition}

We note that the grand Lorentz spaces are monotone concerning the second index $q$, that is, if \( 0 < q < q_1 \leq \infty \), then  
\[
G^{\theta}L_{p,q}\subseteq G^{\theta}L_{p,q_1} \quad \text{and} \quad \|f\|_{G^{\theta}L_{p,q_1}} \leq \|f\|_{G^{\theta}L_{p,q}},
\]
see \cite{NHS}.

Note that replacing \( a_k^* \) with  \( \left( \frac{1}{k} \sum\limits_{m=1}^k (a_m^*)^{\alpha} \right)^{\frac{1}{\alpha}}, \, \alpha \geq 1, \) yields an equivalent norm for the grand Lorentz sequence space (respectively, for the Lorentz sequence space). These spaces are denoted by \( G^{\theta}\ell^{*}_{p,q} \), and \( \ell^{*}_{p,q} \), respectively. In this paper, we will establish our results for sequences within the framework of the equivalent quasi-norm defined for the case \( \alpha = 2 \) as 
\begin{equation} \label{grandseqspace}
\| a \|_{G^{\theta}\ell^{*}_{p,q}} := \begin{cases}
\sup\limits_{0<\varepsilon<1} \varepsilon^{\theta} \left( \sum\limits_{k=1}^\infty \left( k^{\frac{1}{p}-\varepsilon} \left( \frac{1}{k} \sum\limits_{m=1}^{k} (a_m^*)^p \right)^{1/2} \right)^q \frac{1}{k} \right)^{1/q} 
& \text{if } \theta \geq 0,\; 0< q < \infty,\, \\[10pt]
\sup\limits_{0<\varepsilon<1} \sup\limits_{k \ge 1} \, \varepsilon^{\theta} \, k^{\frac{1}{p}-\varepsilon} \left( \frac{1}{k} \sum\limits_{m=1}^{k} (a_m^*)^p \right)^{1/2} 
& \text{if } \theta \geq 0,\; q = \infty.
\end{cases}
\end{equation}

In what follows, we use the following Hardy inequalities. 

\begin{proposition}\label{Hardy-1}
Let $\alpha > 0$, $0 < r \leq q <\infty$, then
\begin{equation}\label{Hardy-1}
\left( \int_0^1 \left(  t^{-\alpha} \left( \int_0^t (f^*(s))^r \, ds \right)^{\frac{1}{r}} \right)^q \frac{dt}{t} \right)^{\frac{1}{q}}
\leq \frac{1}{(r\alpha)^{\frac{1}{r}}} \left(\int_0^1 (t^{\frac{1}{r} - \alpha} f^*(t))^q \frac{dt}{t} \right)^{\frac{1}{q}}.
\end{equation}
\end{proposition}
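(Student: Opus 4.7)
The plan is to reduce the stated inequality, by a standard substitution, to the classical one-dimensional Hardy inequality for primitives of nonnegative functions, and then to establish the latter by integration by parts combined with H\"older's inequality, tracking the constant carefully.

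First I would set $F(t) := \int_0^t (f^*(s))^r\,ds$ and $h(s) := (f^*(s))^r$, so that the left-hand side of \eqref{Hardy-1} equals $\left( \int_0^1 F(t)^{q/r}\, t^{-\alpha q-1}\,dt \right)^{1/q}$. Writing $p := q/r$ and $\beta := \alpha r$, the hypothesis $0 < r \le q$ gives $p \ge 1$ and $\beta > 0$. Raising the target to the $r$-th power, the claim reduces to the weighted Hardy inequality
\[
\left( \int_0^1 F(t)^p \, t^{-p\beta-1}\,dt \right)^{1/p} \le \frac{1}{\beta} \left( \int_0^1 h(t)^p \, t^{p-p\beta-1}\,dt \right)^{1/p},
\]
and taking the $1/r$-th power at the end produces exactly the constant $1/(r\alpha)^{1/r}$ stated in the proposition.

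Next I would prove this Hardy inequality as follows. Integrating $F^p$ against $t^{-p\beta-1}\,dt = -d(t^{-p\beta}/(p\beta))$ by parts on $[\varepsilon,1]$ and using $F' = h$ yields
\[
\int_\varepsilon^1 F(t)^p\, t^{-p\beta-1}\,dt = -\frac{F(1)^p}{p\beta} + \frac{F(\varepsilon)^p \varepsilon^{-p\beta}}{p\beta} + \frac{1}{\beta}\int_\varepsilon^1 F(t)^{p-1} h(t)\, t^{-p\beta}\,dt.
\]
Dropping the negative boundary term at $t=1$ and showing that $F(\varepsilon)^p \varepsilon^{-p\beta} \to 0$ (see below), the left-hand side is bounded by $\beta^{-1}$ times the last integral. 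If $p > 1$, applying H\"older with exponents $p/(p-1)$ and $p$ (splitting $t^{-p\beta}$ suitably so that the two factors recombine to the weights $t^{-p\beta-1}$ and $t^{p-p\beta-1}$) gives
\[
\int_0^1 F^{p-1} h\, t^{-p\beta}\,dt \le \left( \int_0^1 F^p\, t^{-p\beta-1}\,dt \right)^{(p-1)/p} \left( \int_0^1 h^p\, t^{p-p\beta-1}\,dt \right)^{1/p},
\]
while for $p=1$ the Hardy inequality is just Fubini. In either case, dividing through by $(\int_0^1 F^p\, t^{-p\beta-1}\,dt)^{(p-1)/p}$ produces the weighted Hardy inequality with sharp constant $1/\beta$.

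The main delicate point is the vanishing of the lower boundary term $F(\varepsilon)^p \varepsilon^{-p\beta}$ as $\varepsilon \to 0^+$. One may assume without loss of generality that the right-hand side of \eqref{Hardy-1} is finite (otherwise there is nothing to prove), which forces $(f^*(t))^q t^{q/r - \alpha q - 1}$ to be integrable near $0$; a H\"older bound on $F(\varepsilon) = \int_0^\varepsilon h(s)\,ds$ against the appropriate power weight then yields $F(\varepsilon)^p \varepsilon^{-p\beta} = o(1)$. Alternatively, one can truncate $f^*$ to functions supported in $[\delta,1]$, for which the boundary term vanishes identically, prove the inequality uniformly in $\delta$, and pass to the limit by monotone convergence. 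I do not expect any conceptual obstacle beyond this standard endpoint bookkeeping.
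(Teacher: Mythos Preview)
Your argument is correct: the reduction to the classical weighted Hardy inequality via $p=q/r$, $\beta=r\alpha$, $h=(f^*)^r$ is accurate, and the integration-by-parts plus H\"older proof of that inequality (with the truncation/monotone-convergence device to dispose of the boundary term and the finiteness issue) is the standard sharp-constant argument and goes through without obstruction.

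The paper takes a different, somewhat slicker route. After the scaling substitution $s\mapsto st$ it writes the left-hand side as
\[
\left(\int_0^1\left(\int_0^1 t^{-r\alpha}(f^*(st))^r\,t\,ds\right)^{q/r}\frac{dt}{t}\right)^{1/q},
\]
applies Minkowski's integral inequality in the exponent $q/r\ge 1$ to exchange the $ds$- and $dt$-integrations, and then changes variables $u=st$ in the inner $dt$-integral. The $ds$-integral decouples as $\int_0^1 s^{r\alpha-1}\,ds=1/(r\alpha)$, producing the constant $(r\alpha)^{-1/r}$ directly. This avoids integration by parts entirely, hence no boundary terms and no need to assume a~priori finiteness; it also parallels verbatim the proof of the companion inequality (the ``dual'' Hardy estimate with $\int_t^1$ in place of $\int_0^t$). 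Your approach, by contrast, recovers the result as a special case of the textbook Hardy inequality and makes the connection to that theory explicit, at the cost of some endpoint bookkeeping.
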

\begin{proof}Using Minkowski's inequality followed by a change of variables, we obtain:
\begin{align*}
\left( \int_0^1 \left(  t^{-\alpha} \left( \int_0^t (f^*(s))^r \, ds \right)^{\frac{1}{r}} \right)^q \frac{dt}{t} \right)^{\frac{1}{q}} & = \left( \int_0^1 \left(  t^{-\alpha} \left( \int_0^1 (f^*(st))^r \,t ds \right)^{\frac{1}{r}} \right)^q \frac{dt}{t} \right)^{\frac{1}{q}}
\\
&=\left( \int_0^1   \left( \int_0^1 t^{-r \alpha} (f^*(st))^r \,t ds \right)^{\frac{q}{r}}\frac{dt}{t} \right)^{{\frac{r}{q}} \frac{1}{r}}
\\
&\le \left( \int_0^1   \left(\int_0^1 t^{-q \alpha} (f^*(st))^q \,t^{\frac{q}{r}} \frac{dt}{t} \right)^{\frac{r}{q}} ds\right)^{ \frac{1}{r}}
\\
&=\left( \int_0^1  s^{r \alpha -1}ds \right)^{ \frac{1}{r}} \left(\int_0^1 u^{-q \alpha} (f^*(u))^q \,u^{\frac{q}{r}} \frac{du}{u} \right)^{\frac{1}{q}} 
\\
&=\left( \frac{1}{r \alpha} \right)^{ \frac{1}{r}} \left(\int_0^1 ( u ^{\frac{1}{r}- \alpha}f^*(u))^q  \frac{du}{u}\right)^{\frac{1}{q}}.
\end{align*}
\end{proof}

\begin{corollary}
Let \( 0 < r \leq q <\infty\), \( \theta \ge 0 \), and \( \theta_1 = \theta + \frac{1}{r} \). Then,
\[
\inf_{0<\varepsilon \leq \frac{1}{r}} 
\varepsilon^{-\theta}
\left( 
\int_0^1 t^{-\varepsilon} 
\left( \int_0^t (f^*(s))^r \, ds \right)^{\frac{1}{r}} 
dt \right)^{\frac{1}{q}}
\leq \| f \|_{G^{-\theta_1}L_{r,q}}.
\]
\end{corollary}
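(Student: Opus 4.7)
The plan is to deduce this statement as a direct consequence of Proposition \ref{Hardy-1} by pointwise (in $\varepsilon$) multiplication and an infimum. First I would apply Hardy's inequality \eqref{Hardy-1} with the parameter $\alpha$ specialized to an arbitrary $\varepsilon\in(0,1/r]$, which gives
\[
\left(\int_0^1\!\left(t^{-\varepsilon}\Bigl(\int_0^t(f^*(s))^r\,ds\Bigr)^{\frac{1}{r}}\right)^{\!q}\frac{dt}{t}\right)^{\!\frac{1}{q}}
\le \frac{1}{(r\varepsilon)^{1/r}}\left(\int_0^1\bigl(t^{\frac{1}{r}-\varepsilon}f^*(t)\bigr)^{q}\frac{dt}{t}\right)^{\!\frac{1}{q}}.
\]
(The corollary's display appears to drop the outer $q$-th power and the factor $dt/t$; I would restore them, since otherwise the exponent $1/q$ is dimensionally inconsistent with the right-hand side.)

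Next, I would multiply through by $\varepsilon^{-\theta}$ and use $\theta_1=\theta+\tfrac{1}{r}$ to recombine the powers of $\varepsilon$:
\[
\varepsilon^{-\theta}\left(\int_0^1\!\left(t^{-\varepsilon}\Bigl(\int_0^t(f^*(s))^r\,ds\Bigr)^{\frac{1}{r}}\right)^{\!q}\frac{dt}{t}\right)^{\!\frac{1}{q}}
\le \frac{1}{r^{1/r}}\,\varepsilon^{-\theta_1}\left(\int_0^1\bigl(t^{\frac{1}{r}-\varepsilon}f^*(t)\bigr)^{q}\frac{dt}{t}\right)^{\!\frac{1}{q}}.
\]
Taking the infimum over $\varepsilon\in(0,1/r]$ on both sides finishes the argument, once one reads $\|f\|_{G^{-\theta_{1}}L_{r,q}}$ as the natural analogue of Definition \ref{grand Lorentz space} with the sign of the parameter reversed, namely
\[
\|f\|_{G^{-\theta_{1}}L_{r,q}}=\inf_{0<\varepsilon\le 1/r}\varepsilon^{-\theta_{1}}\left(\int_0^1\bigl(t^{\frac{1}{r}-\varepsilon}f^*(t)\bigr)^{q}\frac{dt}{t}\right)^{\!\frac{1}{q}}
\]
(the supremum for positive $\theta$ turning into an infimum for negative $\theta$, since $\varepsilon^{-\theta_{1}}$ is monotone in the opposite direction); the absolute constant $r^{-1/r}$ coming from Hardy is benign and can be absorbed into the convention or written explicitly.

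The substantive content, therefore, is the Hardy estimate; everything else is bookkeeping. The only real obstacle I foresee is the notational one: one must commit to a definition of the ``negative-index'' grand Lorentz functional so that the claimed inequality has a clean meaning. Once that definition is fixed as the infimum analogue of Definition \ref{grand Lorentz space}, the corollary is immediate from the chain above, and no further approximation or interpolation is needed.
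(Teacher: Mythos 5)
Your derivation is exactly what the paper intends: the corollary is stated without proof as an immediate consequence of the Hardy inequality \eqref{Hardy-1}, obtained by setting $\alpha=\varepsilon$, multiplying by $\varepsilon^{-\theta}$, and taking the infimum over $\varepsilon\in(0,1/r]$, in complete analogy with the companion corollary that follows Proposition~\ref{Hardy-Se}. Your repair of the display (restoring the $q$-th power and the measure $dt/t$) and your infimum-type reading of $\|f\|_{G^{-\theta_1}L_{r,q}}$, which the paper never defines, are the natural interpretations under which the statement is non-vacuous, and with that convention your chain of inequalities is correct.
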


\begin{proposition}\label{Hardy-Se}
Let \( \alpha > 0 \), \( 0 < r \leq q  \,\le \, \infty \). Then,
\begin{equation}\label{Hardy-2}
 \left(\int_{0}^{1} 
\left(t^{\alpha} 
\left( \int_{t}^{1} (f^*(s))^r ds \right)^{\frac{1}{r}}\right)^q
\frac{dt}{t}\right)^{\frac{1}{q}}
\leq 
\frac{1}{(r\alpha)^{\frac{1}{r}}} 
\left(\int_{0}^{1} 
\left( t^{\frac{1}{r} + \alpha} f^*(t) \right)^q 
\frac{dt}{t}\right)^{\frac{1}{q}}.
\end{equation}
\end{proposition}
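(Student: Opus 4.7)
The plan is to mirror the proof of Proposition \ref{Hardy-1}, adapting the change of variables to handle the ``tail'' integration range $(t,1)$ rather than $(0,t)$. First, I would rescale the inner variable via $s = tv$, so that
\[
\int_t^1 (f^*(s))^r\,ds \;=\; t\int_1^{1/t} (f^*(tv))^r\,dv.
\]
Substituting this into the left-hand side rewrites the expression to estimate as
\[
\left(\int_0^1 \left(\int_1^{1/t} t^{r\alpha+1}(f^*(tv))^r\,dv\right)^{q/r} \frac{dt}{t}\right)^{1/q}.
\]
The role of this step is to pull the $t$-dependence out of the integrand and onto the domain boundary $v \le 1/t$, making the outer integral ready for a Minkowski-type exchange.

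Next, since $q/r \ge 1$, I would apply Minkowski's integral inequality to swap the order of integration. The relevant observation is that the region $\{(v,t): 0<t<1,\ 1<v<1/t\}$ coincides with $\{(v,t): v>1,\ 0<t<1/v\}$, which gives the bound
\[
\left(\int_1^{\infty} \left(\int_0^{1/v} t^{(r\alpha+1)q/r}(f^*(tv))^q\, \tfrac{dt}{t}\right)^{r/q}\,dv\right)^{1/r}.
\]
I would then substitute $u = tv$ in the inner integral, which factors out $v^{-(r\alpha+1)q/r}$ and leaves exactly the desired $L_{r,q}$-type integral of $f^*$ in the variable $u$.

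Pulling the $v$-factor out and raising to the $1/r$ power, the remaining outer integral $\int_1^{\infty} v^{-r\alpha-1}\,dv = 1/(r\alpha)$ is elementary and produces precisely the constant $1/(r\alpha)^{1/r}$ appearing in the statement. The case $q=\infty$ would be treated identically, with the $L^{q/r}$ norm in $t$ replaced by an essential supremum and Minkowski's inequality degenerating to the triangle inequality for suprema.

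The bookkeeping for the change of variables and the domain exchange is the only delicate step; the argument hinges on the condition $r \le q$, which is exactly what makes Minkowski's integral inequality available at the power $q/r$. Beyond that, the proof is formally dual to Proposition \ref{Hardy-1}: where the first inequality shrinks a running mean $\int_0^t$ using the substitution $s \mapsto s/t$, the present one shrinks a running tail $\int_t^1$ using $s \mapsto s/t$ but with the range $v \ge 1$, and the convergence of $\int_1^{\infty} v^{-r\alpha-1}\,dv$ (requiring $\alpha>0$) plays the same role here as $\int_0^1 s^{r\alpha-1}\,ds$ (also requiring $\alpha>0$) in the earlier argument.
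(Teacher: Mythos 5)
Your proposal is correct and follows essentially the same route as the paper's proof: rescale the inner variable, apply Minkowski's integral inequality at exponent $q/r\ge 1$, substitute back, and evaluate $\int_1^{\infty} v^{-r\alpha-1}\,dv = 1/(r\alpha)$, with the $q=\infty$ case handled by the same scheme with suprema. Your bookkeeping of the integration region $\{v>1,\ 0<t<1/v\}$ is if anything slightly more careful than the paper's, which enlarges the inner $t$-integral to $(0,1)$ and relies on $f^*$ vanishing beyond $1$.
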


\begin{proof}
We start with case \(q< \infty\). Applying a change of variables, using Minkowski's inequality, and then performing another change of variables, we get 
\begin{align*}
  \left(\int_{0}^{1} 
\left(t^{\alpha} 
\left( \int_{t}^{1} (f^*(s))^r ds \right)^{\frac{1}{r}}\right)^q
\frac{dt}{t}\right)^{\frac{1}{q}} & = \left(\int_{0}^{1} 
\left(t^{\alpha} 
\left( \int_{1}^{\frac{1}{t}} (f^*(ts))^r tds \right)^{\frac{1}{r}}\right)^q
\frac{dt}{t}\right)^{\frac{1}{q}}
\\
&= \left(\int_{0}^{1} 
\left( t^{r\alpha} \int_{1}^{\frac{1}{t}} (f^*(ts))^r tds \right)^{\frac{q}{r}}
\frac{dt}{t}\right)^{\frac{r}{q}\times \frac{1}{r}}
\\
&\le \left( \int_{1}^{\infty} \left(\int_{0}^{1} t^{\alpha q } (f^{*}(ts))^{q} t^{\frac{q}{r}} \frac{dt}{t} \right)^{\frac{r}{q}} ds\right)^{\frac{1}{r}}
\\
&=\left( \int_{1}^{\infty} s^{-1-r\alpha}  ds\right)^{\frac{1}{r}}  \left(\int_{0}^{1}   (u^{\alpha+ \frac{1}{r} }f^{*}(u))^{q}  \frac{du}{u} \right)^{\frac{1}{q}}
\\
&=\left( \frac{1}{r \alpha}\right)^{\frac{1}{r}}  \left(\int_{0}^{1}   (u^{\alpha+ \frac{1}{r} }f^{*}(u))^{q}  \frac{du}{u} \right)^{\frac{1}{q}}.
\end{align*}

For $q=\infty$, we show 
\[
\sup_{t > 0} \left[ t^{\alpha} 
\left( \int_{t}^{1} (f^*(s))^r \, ds \right)^{\frac{1}{r}} \right]
\leq 
\frac{1}{(r\alpha)^{\frac{1}{r}}} 
\sup_{t > 0} \left[ t^{\frac{1}{r} + \alpha} f^*(t) \right].
\]

Using a change of variable, one has
\begin{align*}
\sup_{t > 0} \left[ t^{\alpha} 
\left( \int_{t}^{1} (f^*(s))^r \, ds \right)^{\frac{1}{r}} \right] &= \sup_{t > 0} t^{\alpha} \left( \int_{1}^{\frac{1}{t}} (f^*(tu))^r \, t du \right)^{\frac{1}{r}} 
\\
& \le \left( \int_{1}^{\infty}  \sup_{t > 0}  ( t^{\alpha + \frac{1}{r}}  f^*(tu) )^r \,  du  \right)^{\frac{1}{r}} 
\\
& \le \left( \int_{1}^{\infty}  \sup_{\zeta > 0}  ( (\frac{\zeta}{u})^{\alpha + \frac{1}{r}}  f^*(\zeta) )^r \,  du  \right)^{\frac{1}{r}} 
\\
& \le \left( \int_{1}^{\infty}  \frac{1}{u^{1+ r\,\alpha}}  \,  du  \right)^{\frac{1}{r}}  \sup_{\zeta > 0}   \zeta^{\alpha + \frac{1}{r}}  f^*(\zeta) 
\\
& \le \left(  \frac{1}{\alpha r} \right)^{\frac{1}{r}}  \sup_{\zeta > 0}   \zeta^{\alpha + \frac{1}{r}}  f^*(\zeta) .
\end{align*}
This completes the proof of the desired result.
\end{proof}

\begin{corollary}
Let \( 0  \le r \leq q  \,< \infty  \), \(\theta \ge 0 \), and \( \theta_1 = \theta + \frac{1}{r} \). Then  
\[
\sup_{0<\varepsilon<\frac{1}{r}} \varepsilon^{\theta_1}  
\left( \int_0^{1} \left(t^{\varepsilon}  
\left( \int_t^1 (f^*(s))^r \, ds \right)^{\frac{1}{r}} \right)^{q} 
\frac{dt}{t} \right)^{\frac{1}{q}}  
\leq \|f\|_{G^{\theta}L_{r,q}}.
\]
\end{corollary}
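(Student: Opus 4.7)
The proof plan is to reduce this corollary directly to Proposition \ref{Hardy-Se} by choosing the parameter $\alpha$ there to be $\varepsilon$, and then to multiply through by an appropriate power of $\varepsilon$ so that the $\varepsilon$-dependent constant produced by Proposition \ref{Hardy-Se} merges cleanly with the weight $\varepsilon^{\theta_1}$ sitting outside the integral.

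First, I would fix an arbitrary $\varepsilon \in (0, 1/r)$ and invoke Proposition \ref{Hardy-Se} with $\alpha = \varepsilon$. This gives
\[
\left( \int_0^1 \left( t^\varepsilon \left( \int_t^1 (f^*(s))^r\, ds \right)^{1/r} \right)^q \frac{dt}{t} \right)^{1/q}
\leq \frac{1}{(r\varepsilon)^{1/r}} \left( \int_0^1 \left( t^{1/r + \varepsilon} f^*(t) \right)^q \frac{dt}{t} \right)^{1/q}.
\]
Multiplying both sides by $\varepsilon^{\theta_1}$ and using the identity $\theta_1 = \theta + 1/r$, the coefficient on the right rearranges as $\varepsilon^{\theta_1}/(r\varepsilon)^{1/r} = r^{-1/r}\,\varepsilon^{\theta}$, so
\[
\varepsilon^{\theta_1} \left( \int_0^1 \left( t^\varepsilon \left( \int_t^1 (f^*(s))^r\, ds \right)^{1/r} \right)^q \frac{dt}{t} \right)^{1/q}
\leq r^{-1/r}\, \varepsilon^{\theta} \left( \int_0^1 \left( t^{1/r + \varepsilon} f^*(t) \right)^q \frac{dt}{t} \right)^{1/q}.
\]

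Finally, I would take the supremum over $\varepsilon \in (0, 1/r)$ on both sides. Since the grand Lorentz quasi-norm $\|f\|_{G^\theta L_{r,q}}$ is defined as a supremum over the larger interval $\varepsilon \in (0,1)$ of exactly the weighted integral appearing on the right, the right-hand side is dominated by $r^{-1/r}\,\|f\|_{G^\theta L_{r,q}}$, which is the desired bound (absorbing the harmless absolute factor $r^{-1/r}$ into the inequality as stated).

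I do not anticipate any serious obstacle here: the entire argument is a bookkeeping verification that the $\varepsilon$-dependent constant $(r\varepsilon)^{-1/r}$ produced by Proposition \ref{Hardy-Se} is precisely absorbed by the outer weight $\varepsilon^{\theta_1}$ whenever $\theta_1 - 1/r = \theta$, leaving the grandization parameter $\varepsilon^{\theta}$ that matches the definition of $G^{\theta}L_{r,q}$ on the right-hand side. All analytic content has already been handled inside Proposition \ref{Hardy-Se}.
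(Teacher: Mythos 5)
Your proposal is correct and is essentially the paper's own (implicit) derivation: the corollary is stated as an immediate consequence of Proposition \ref{Hardy-Se}, obtained exactly as you do by taking $\alpha=\varepsilon$, multiplying by $\varepsilon^{\theta_1}$, and using $\theta_1=\theta+\tfrac{1}{r}$ so that $\varepsilon^{\theta_1}(r\varepsilon)^{-1/r}=r^{-1/r}\varepsilon^{\theta}$. The only point worth noting is that for $r\ge 1$ (the meaningful range, which also keeps $\varepsilon<\tfrac{1}{r}\le 1$ inside the interval used in the definition of $\|f\|_{G^{\theta}L_{r,q}}$) one has $r^{-1/r}\le 1$, so the inequality holds with constant exactly $1$ as stated and no absorption of the factor is even needed.
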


\section{Proofs of main results}
\label{proofs}

\subsection{Proof of Theorem \ref{T-e}.} We split the statement of Theorem \ref{T-e} into two parts depending on the range of the parameter $q$ and prove separately the following lemmas. 

\begin{lemma}\label{lemma}
 Let \( 1 < p < 2 \), and let \( f \in L_{p,q} \). Then  
\begin{equation}\label{Main}
\|a\|_{\ell^{*}_{p',q}} \leq  c \frac{1}{\left( \frac{1}{p} - \frac{1}{2} \right)^{\frac{1}{2}}} \|f\|_{L_{p,q}} , \quad 2 \le q \le \infty,
\end{equation}
where the constant $c$ is independent of \(p\) and \(a= \{a_k\}_{k=1}^{\infty}\) are the corresponding Fourier coefficients of \( f \) with respect to  an arbitrary given orthonormal system.   
\end{lemma}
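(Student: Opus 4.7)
The plan is to reduce the $\ell^{*}_{p',q}$-bound to the classical Lorentz Hausdorff--Young inequality $\|a\|_{\ell_{p',q}} \lesssim \|f\|_{L_{p,q}}$, with the explicit factor $(1/p-1/2)^{-1/2}$ being produced by a single application of the Hardy inequality of Proposition~\ref{Hardy-1}. All of the $p$-dependence should be concentrated in that Hardy step, while the Hausdorff--Young step should contribute only a constant independent of $p$.

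To execute this, I first set $\alpha = 1/p-1/2>0$ and use the identity $1/p'-1/2 = -\alpha$ to rewrite
\[
\|a\|_{\ell^{*}_{p',q}}
= \left(\sum_{k=1}^{\infty}\left(k^{-\alpha}\Bigl(\sum_{m=1}^{k}(a_m^*)^{2}\Bigr)^{1/2}\right)^{q}\frac{1}{k}\right)^{1/q},
\]
which is exactly the left-hand side of the sequence version of Proposition~\ref{Hardy-1} taken at $r=2$: the continuous estimate transfers to sequences by identifying $\{a_m^*\}_{m\ge 1}$ with the step function $\sum_m a_m^*\chi_{[m-1,m)}$, and the $q=\infty$ case is handled by the same scheme as in the proof of Proposition~\ref{Hardy-Se}. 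The Hardy step then gives
\[
\|a\|_{\ell^{*}_{p',q}} \le \frac{1}{\sqrt{2\alpha}}\left(\sum_{k=1}^{\infty}\bigl(k^{1/p'}a_k^*\bigr)^{q} \frac{1}{k}\right)^{1/q} = \frac{1}{\sqrt{2(1/p-1/2)}}\,\|a\|_{\ell_{p',q}},
\]
where the constant $(2\alpha)^{-1/2}$ is the single source of the stated blow-up as $p\nearrow 2$. I then invoke the Lorentz Hausdorff--Young inequality recalled in the introduction, $\|a\|_{\ell_{p',q}} \lesssim \|f\|_{L_{p,q}}$, whose implicit constant I expect to remain uniformly bounded in $p$ in a left-neighborhood of $2$; this follows from real interpolation of the endpoint bounds $L_1\to \ell_\infty$ (supplied by the orthonormal system) and the sharp Bessel endpoint $L_2\to \ell_2$ with constant $1$. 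Composing these two estimates yields the lemma for every $q\in [2,\infty]$.

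The main obstacle I anticipate is justifying the uniformity in $p$ of the constant hidden in the Lorentz Hausdorff--Young step, i.e.\ confirming that no additional blow-up appears as $p\nearrow 2$ (equivalently, as the interpolation parameter $\theta = 2/p'$ tends to $1$), so that the Hardy inequality really is the only source of the stated singular factor. Because the $L_2\to \ell_2$ endpoint is sharp with constant one, a careful treatment of the $K$-functional near that endpoint should deliver $O(1)$ interpolation constants uniformly in $q\in[2,\infty]$, completing the argument.
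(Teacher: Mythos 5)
Your first step is fine: with $\alpha=\tfrac1p-\tfrac12$, the discrete analogue of Proposition \ref{Hardy-1} with $r=2$ does give $\|a\|_{\ell^{*}_{p',q}}\le (2\alpha)^{-1/2}\|a\|_{\ell_{p',q}}$ for $2\le q\le\infty$, and this is the same Hardy mechanism that produces the singular factor in the paper. The gap is your second step: you assume that the Lorentz Hausdorff--Young inequality $\|a\|_{\ell_{p',q}}\le C\|f\|_{L_{p,q}}$ holds with $C$ bounded as $p\to 2^{-}$, and you only say that a careful $K$-functional argument ``should'' deliver this. It cannot. If such a uniform constant existed, then, since $\|f\|_{L_{p,q}}\le\|f\|_{L_{2,q}}$ on $[0,1]$ and $k^{1/p'}\uparrow k^{1/2}$ as $p\uparrow 2$, monotone convergence would yield the endpoint inequality $\|a\|_{\ell_{2,q}}\le C\|f\|_{L_{2,q}}$ for every $f\in L_{2,q}$ and every $q>2$. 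That endpoint inequality is precisely the one whose failure constitutes the Herz--Bochkarev limiting problem (\cite{Herz}; Bochkarev's counterexample in \cite{Boch}; see also \cite{ST}): at $p=2$, $q>2$ a logarithmic loss is unavoidable, which is the entire reason this paper and \cite{ST} work with modified functionals. So your step 2 is not merely unproven for $q>2$ --- it is false, and with it the whole accounting of your proof, which tries to confine all of the $p$-dependence to the $\ell_{p',q}\to\ell^{*}_{p',q}$ Hardy step, breaks down (only the case $q=2$ would survive).

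Your proposed justification also fails on its own terms: real interpolation of $T:L_1\to\ell_\infty$ and $T:L_2\to\ell_2$ gives a bound of the form $M_0^{1-\theta}M_1^{\theta}$ only with respect to the abstract norms of $(L_1,L_2)_{\theta,q}$ and $(\ell_\infty,\ell_2)_{\theta,q}$; the constants in the identifications $(L_1,L_2)_{\theta,q}\simeq L_{p,q}$ and $(\ell_\infty,\ell_2)_{\theta,q}\simeq\ell_{p',q}$ come from Hardy inequalities of the type in Propositions \ref{Hardy-1} and \ref{Hardy-Se} and degenerate, roughly like $\bigl(2(\tfrac1p-\tfrac12)\bigr)^{-1/2}$, as $\theta\to 1$; the fact that the $L_2\to\ell_2$ endpoint has constant one is irrelevant to this degeneration. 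This is exactly where the paper's proof differs from yours: it performs the interpolation by hand, splitting $f=f_0+f_1$ at the height $f^{*}(2^{-k})$ separately for each dyadic block, estimating the coefficients of $f_0$ through $\|f_0\|_{L_1}$ and those of $f_1$ through Bessel's inequality, so that the only singular constant is the one produced by the Hardy inequality \eqref{Hardy-2} applied on the function side (the term $J_2$); no uniform non-starred Hausdorff--Young inequality is ever invoked. To repair your argument you would have to replace step 2 by such an explicit per-scale decomposition, at which point you essentially recover the paper's proof.
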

\begin{proof}[Proof of Lemma \ref{lemma}]
Let us recall that 
\[
\|f\|_{L_2}= \left(\int_{0}^{1} (f^{*}(t))^2 dt\right)^{\frac{1}{2}} \asymp \left(\sum_{k=0}^{\infty} \left(2^{-\frac{k}{2}}f^{*}(2^k)\right)^2\right)^{\frac{1}{2}}. 
\]
Let $k \in \mathbb{Z}_{+}$ and  $\tau=2^{-k}$. We decompose the function \( f \in L_{2}([0,1]) \)  into two functions \( f_1(x) \) and \( f_2(x) \) as follows:  

\begin{equation}\label{9}
f_0(x) =
\begin{cases} 
    f(x)-f^{*}(\tau) & \text{if } ~|f(x)| \ge  f^{*}(\tau), \\ 
    0 & \text{if} ~~ |f(x)| <  f^{*}(\tau),
\end{cases}    
\end{equation}

\begin{equation}\label{10}
f_1(x) = f(x) - f_0(x).
\end{equation}

Now we calculate the left-hand side of \eqref{Main} as follows:

\begin{align*}
 \|a\|_{\ell_{p',q}^{*}}  
 &\asymp \left( \sum_{k=0}^{\infty} \left( 2^{\frac{k}{p'}} \left( \frac{1}{2^k} \sum_{m=0}^{2^{k}} (a_{m}^{*}(f))^{2} \right)^{1/2} \right)^{q} \right)^{\frac{1}{q}} 
 \\
 &\lesssim \left( \sum_{k=0}^{\infty} \left( 2^{\frac{k}{p'}} \left( \frac{1}{2^k} \sum_{m=0}^{2^{k}} \left( a_{m}^{*}(f_1) + a_{m}^{*}(f_0) \right)^{2} \right)^{1/2} \right)^{q} \right)^{\frac{1}{q}} 
 \\
 &\lesssim \left( \sum_{k=0}^{\infty} \left( 2^{\frac{k}{p'}} \left( \frac{1}{2^k} \sum_{m=0}^{2^{k}} (a_{m}^{*}(f_1))^{2} \right)^{1/2} \right)^{q} \right)^{\frac{1}{q}} \\
 &\quad + \left( \sum_{k=0}^{\infty} \left( 2^{\frac{k}{p'}} \left( \frac{1}{2^k} \sum_{m=0}^{2^{k}} (a_{m}^{*}(f_0))^{2} \right)^{1/2} \right)^{q} \right)^{\frac{1}{q}} 
 =: I_1 + I_2.
\end{align*}

First, consider $I_2$. We have \(|a_{m}(f)| \le \|f\|_{L_1}\), that is, 
\begin{align*}
 I_2=    \left( \sum_{k=0}^{\infty} \left( 2^{\frac{k}{p'}} \left( \frac{1}{2^k} \sum_{m=0}^{2^{k}} (a_{m}^{*}(f_0))^{2} \right)^{1/2} \right)^{q} \right)^{\frac{1}{q}} \lesssim \left( \sum_{k=0}^{\infty} \left(2^{\frac{k}{p'}} \|f_0\|_{L_1}\right)^{q}\right)^{\frac{1}{q}}.
\end{align*}
We also have  
\begin{align*}
 \|f_0\|_{L_1} = \int_{0}^{1} f^*_0(t) dt =  \int_{0}^{\tau} f^*(t)-f^*(\tau) dt \le \int_{0}^{2^{-k}}f^*(t) dt \asymp \sum_{m=k}^{\infty} 2^{-m} f^{*}(2^{-m}).
\end{align*}
Combining these with Hardy's inequality we get
\begin{align*}
I_2 &\le \left( \sum_{k=0}^{\infty} \left(2^{\frac{k}{p'}} \|f_0\|_{L_1}\right)^{q}\right)^{\frac{1}{q}} 
\\
&=  \left( \sum_{k=0}^{\infty} \left(2^{\frac{k}{p'}} \sum_{m=k}^{\infty} 2^{-m} f^{*}(2^{-m})\right)^{q}\right)^{\frac{1}{q}} 
\\
&\asymp  \left(\sum_{m=0}^{\infty} \left( 2^{\frac{-m}{p}} f^{*} (2^{-m})\right)^{q} \right)^{\frac{1}{q}}
\\
&\asymp \|f\|_{L_{p,q}}. 
\end{align*}

Next, consider $I_1$. By using the fact that for a convex function \( h \), we have  
\[
h \left(\frac{1}{n} \sum\limits_{m=1}^{n}  a_{m}^{*}  \right) \le \frac{1}{n} \sum\limits_{m=1}^{n}  h\left(a_{m}^{*}\right),
\]  
and applying Parseval’s inequality, we obtain

\begin{align*}
 I_1&=  \left( \sum_{k=0}^{\infty} \left(2^{k\left(\frac{1}{p'}-\frac{1}{2}\right)}  ~ \left(\sum_{m=0}^{2^{k}} (a_{m}^{*}(f_1))^2\right)^{\frac{1}{2}}\right)^{q}\right)^{\frac{1}{q}}
\\
&\le \left( \sum_{k=0}^{\infty} \left(2^{k\left(\frac{1}{2}-\frac{1}{p}\right)}  ~ \|f_1\|_{L_2}\right)^{q}\right)^{\frac{1}{q}}.
\end{align*}

On the other hand, by using \eqref{9} we have
\[
f_1(x)=
\begin{cases}
f^*(\tau) & \text{if } |f(x)|\ge f^*(\tau),\\[1mm]
f(x) & \text{if } |f(x)|< f^*(\tau).
\end{cases}
\]
Let
\[
E=\{x\in[0,1]: \,|f(x)|>f^*(\tau)\},
\]
then 
\[
\mu(E) = \mu(\{t : \, f^{*}(t) > f^{*}(\tau)\}) .
\]
Thus, the decreasing rearrangement of \(f_1\) is
\[
f_1^*(t)=
\begin{cases}
f^*(2^{-k}), & 0\le t< 2^{-k},\\[1mm]
f^*(t), & 2^{-k}\le t\le 1.
\end{cases}
\]
Therefore, 
\begin{align}\label{l2est}
\|f_1\|_{L_2} = \left(\int_0^1 (f_1^*(t))^2\,dt\right)^{\frac{1}{2}}& = \left(\int_0^{2^{-k}} (f^*(2^{-k}))^2\,dt + \int_{2^{-k}}^{1}  (f^*(t))^2\right)^{\frac{1}{2}} \notag
\\
&= 2^{\frac{-k}{2}} (f^*(2^{-k})) + \left(\int_{2^{-k}}^{1}  (f^*(t))^2\right)^{\frac{1}{2}}
\\
&\asymp
2^{\frac{-k}{2}} (f^*(2^{-k})) +  \left(\sum_{m=1}^{k} 2^{-m}(f^{*}(2^{-m}))^2\right)^{\frac{1}{2}} , \notag
\end{align}
and  
\begin{align*}
 I_1&\le \left( \sum_{k=0}^{\infty} \left(2^{k\left(\frac{1}{2}-\frac{1}{p}\right)} \left( 2^{\frac{-k}{2}} (f^*(2^{-k})\right) \right)^q \right)^{\frac{1}{q}}\\
 & + \left( \sum_{k=0}^{\infty} \left(2^{k\left(\frac{1}{2}-\frac{1}{p}\right)}  ~ \left(\sum_{m=1}^{k} 2^{-m}(f^{*}(2^{-m}))^2\right)^{\frac{1}{2}}\right)^{q}\right)^{\frac{1}{q}}
 \\
 &= J_1 +J_2.
\end{align*}

Here, we have
\begin{align*}
J_1= \left( \sum_{k=0}^{\infty} \left(2^{-\frac{k}{p}}  ~ f^*(2^{-k})\right)^{q}\right)^{\frac{1}{q}} \lesssim \|f\|_{L_{p,q}}.
\end{align*}

Finally, to estimate \(J_2\), we apply the dyadic form of Hardy's inequality  \eqref{Hardy-2}, 
\begin{align*}
&\left( \sum_{k=0}^{\infty} \left(2^{k\left(\frac{1}{2}-\frac{1}{p}\right)}  ~ \left(\sum_{m=0}^{k} 2^{-m}(f^{*}(2^{-m}))^2\right)^{\frac{1}{2}}\right)^{q}\right)^{\frac{1}{q}}
\\
&\qquad \qquad \qquad \qquad \qquad \qquad \lesssim \frac{1}{\left(\frac{1}{p}-\frac{1}{2}\right)^{\frac{1}{2}}}
\left( \sum_{k=0}^{\infty} \left( 2^{-\frac{k}{p}} f^*(2^{-k}) \right)^q \right)^{\frac{1}{q}}
\\
&\qquad \qquad \qquad \qquad \qquad\qquad\lesssim \frac{1}{\left(\frac{1}{p}-\frac{1}{2}\right)^{\frac{1}{2}}}
\|f\|_{L_{p,q}}.
\end{align*}

\end{proof}

Now we focus on the range \( 0 < q < 2 \).

\begin{lemma}\label{lemma2}
Let \( 1 < p < 2 \) and \( f \in L_{p,q} \). Then  
\begin{equation}\label{Main-2}
\|a\|_{\ell^{*}_{p',q}} \leq  c \frac{1}{\left( \frac{1}{p} - \frac{1}{2} \right)^{\frac{1}{q}}} \|f\|_{L_{p,q}} , \quad 0< q < 2,
\end{equation}
where the constant $c$ is independent of \(p\) and \(a= \{a_k\}_{k=1}^{\infty}\) are the corresponding Fourier coefficients of \( f \) with respect to an arbitrary given orthonormal system.
\end{lemma}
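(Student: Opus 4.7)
The plan is to mirror the blueprint of Lemma \ref{lemma}: use the same truncation $f = f_0 + f_1$ at level $f^{*}(\tau)$ with $\tau = 2^{-k}$, invoke the sub-additivity of the non-increasing rearrangement (supplemented by the $q$-subadditivity of the quasi-norm when $0 < q < 1$) to obtain $\|a(f)\|_{\ell^{*}_{p',q}} \lesssim I_1 + I_2$, and then handle each piece with $q$-appropriate tools.

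The term $I_2$ is treated exactly as before: combining $|a_m(f_0)| \le \|f_0\|_{L_1}$ with $\|f_0\|_{L_1} \lesssim \sum_{m \ge k} 2^{-m} f^{*}(2^{-m})$ and the usual dyadic Hardy interchange (invoking $\sum a_m \le \bigl(\sum a_m^q\bigr)^{1/q}$ when $0 < q < 1$, and Minkowski otherwise) yields $I_2 \lesssim \|f\|_{L_{p,q}}$ with a constant independent of $p$.

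The heart of the argument is $I_1$. After Parseval gives $\bigl(\tfrac{1}{2^k}\sum_{m=0}^{2^k}(a_m^{*}(f_1))^2\bigr)^{1/2} \le 2^{-k/2}\|f_1\|_{L_2}$ and the identity \eqref{l2est} is applied, one splits $I_1 \le J_1 + J_2$, where $J_1 \asymp \|f\|_{L_{p,q}}$ and
\[
J_2 = \left(\sum_{k=0}^{\infty} \Bigl( 2^{k(1/2-1/p)} \Bigl( \sum_{m=1}^{k} 2^{-m}(f^{*}(2^{-m}))^{2} \Bigr)^{1/2} \Bigr)^{q} \right)^{1/q}.
\]
Proposition \ref{Hardy-Se} with $r=2$ is unavailable here since it requires $q \ge 2$; the key observation is the discrete embedding $\ell^{2} \hookrightarrow \ell^{q}$ (valid for sequences when $0 < q \le 2$, even though its continuous analogue fails on intervals of finite measure):
\[
\Bigl( \sum_{m=1}^{k} 2^{-m}(f^{*}(2^{-m}))^{2} \Bigr)^{1/2} \le \Bigl( \sum_{m=1}^{k} 2^{-mq/2}(f^{*}(2^{-m}))^{q} \Bigr)^{1/q}.
\]
Inserting this estimate into $J_2$, interchanging the order of summation, and evaluating
\[
\sum_{k \ge m} 2^{kq(1/2-1/p)} = \frac{2^{mq(1/2-1/p)}}{1 - 2^{-q(1/p-1/2)}} \asymp \frac{2^{mq(1/2-1/p)}}{q(1/p-1/2)}
\]
(from $1 - 2^{-x} \asymp x$ for small $x > 0$) produces the singular factor, while the remaining sum collapses to $\sum_m 2^{-mq/p}(f^{*}(2^{-m}))^{q} \asymp \|f\|_{L_{p,q}}^{q}$. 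Taking the $q$-th root yields $J_2 \lesssim (1/p-1/2)^{-1/q} \|f\|_{L_{p,q}}$, and combining with $J_1$ and $I_2$ finishes the proof.

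The principal obstacle is the gap where Hardy's inequality at index $r = 2$ is unavailable; the discrete $\ell^{p}$-embedding is the one place where the argument departs from Lemma \ref{lemma}, and it is precisely this replacement that converts the $1/2$-exponent of Lemma \ref{lemma} into the $1/q$-exponent required here, since the singular factor now arises from a geometric series of power $q$ rather than from a Hardy constant of the form $(r\alpha)^{-1/r}$ with $r=2$.
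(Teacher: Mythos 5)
Your proposal is correct and follows essentially the same route as the paper: the same $f_0+f_1$ truncation, the same treatment of $I_2$ (Hardy/Minkowski for $1\le q<2$, $q$-power subadditivity for $0<q<1$), and for $J_2$ your explicit use of $\|x\|_{\ell^2}\le\|x\|_{\ell^q}$ followed by interchanging sums and evaluating the geometric series is exactly what the paper compresses into the phrase ``using Jensen's inequality,'' producing the same $(1/p-1/2)^{-1/q}$ factor. The only blemish is notational: the embedding you use should be written $\ell^{q}\hookrightarrow\ell^{2}$ (for $0<q\le 2$), not $\ell^{2}\hookrightarrow\ell^{q}$, though the displayed inequality you actually apply is the correct one.
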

\begin{proof}[Proof of Lemma \ref{lemma2}]
As in the proof of Lemma \ref{lemma}, let \( f \in L_{2}([0,1]) \) decompose into two functions \( f_1(x) \) and \( f_2(x) \) as follows:  

\begin{equation}\label{9-2}
f_0(x) =
\begin{cases} 
    f(x)-f^{*}(\tau), & \text{if } ~|f(x)| \ge  f^{*}(\tau) \\ 
    0, & \text{if} ~~ |f(x)| <  f^{*}(\tau)
\end{cases}    
\end{equation}

\begin{equation}\label{10-2}
f_1(x) = f(x) - f_0(x).
\end{equation}

Now we estimate the left-hand side of \eqref{Main-2} as follows:

\begin{align*}
 \|a\|_{\ell^{*}_{p'q}} &\lesssim \left( \sum_{k=0}^{\infty} \left( 2^{\frac{k}{p'}} \left( \frac{1}{2^k} \sum_{m=0}^{2^{k}} (a_{m}^{*}(f_1))^{2} \right)^{1/2} \right)^{q} \right)^{\frac{1}{q}} \\
 & + \left( \sum_{k=0}^{\infty} \left( 2^{\frac{k}{p'}} \left( \frac{1}{2^k} \sum_{m=0}^{2^{k}} (a_{m}^{*}(f_0))^{2} \right)^{1/2} \right)^{q} \right)^{\frac{1}{q}}  
 \\
 &= I_1 +I_2.
\end{align*}

We have 
\begin{align*}
 I_2 \lesssim \left( \sum_{k=0}^{\infty} \left(2^{\frac{k}{p'}} \|f_0\|_{L_1}\right)^{q}\right)^{\frac{1}{q}} &=  \left( \sum_{k=0}^{\infty} \left(2^{\frac{k}{p'}} \sum_{m=k}^{\infty} 2^{-m} f^{*}(2^{-m})\right)^{q}\right)^{\frac{1}{q}}. 
\end{align*}
One can see that estimating the term corresponding to \(f_0\), namely \(I_2\) in Lemma \ref{lemma} works for range \(1\le q<2\), so we need to consider case $0<q<1$. By using Jensen's inequality and the fact $2^{-qm} \ge 2^{m}$, we have 
\begin{align*}
 I_2 &\le    \left( \sum_{k=0}^{\infty} 2^{\frac{kq}{p'}} \sum_{m=k}^{\infty} 2^{-qm} \left(f^{*}\right(2^{-m}))^q\right)^{\frac{1}{q}} 
 \\
 &=  \left( \sum_{m=0}^{\infty}  2^{-qm} \left(f^{*}\right(2^{-m}))^q \sum_{k=0}^{m} 2^{\frac{kq}{p'}} \right)^{\frac{1}{q}} 
 \\
 &\asymp_{q} \left( \sum_{m=0}^{\infty}  \left(2^{\frac{-m}{p}} f^{*}(2^{-m})\right)^q  \right)^{\frac{1}{q}} \asymp \|f\|_{L_{p,q}}.
\end{align*}

Following a computation similar to that in the proof of Lemma \ref{lemma} and using Jensen's inequality, for \(0<q <2\), we get
\begin{align*}
 I_1&\le \left( \sum_{k=0}^{\infty} \left(2^{k\left(\frac{1}{2}-\frac{1}{p}\right)}  ~ \|f_1\|_{L_2}\right)^{q}\right)^{\frac{1}{q}}
 \\
 &\le \left( \sum_{k=0}^{\infty} \left(2^{k\left(\frac{1}{2}-\frac{1}{p}\right)} \left( 2^{\frac{-k}{2}} (f^*(2^{-k})\right) \right)^q \right)^{\frac{1}{q}} \\
 & + \left( \sum_{k=0}^{\infty} \left(2^{k\left(\frac{1}{2}-\frac{1}{p}\right)}  ~ \left(\sum_{m=0}^{k} \left(2^{\frac{-m}{2}}f^{*}(2^{-m})\right)^2\right)^{\frac{1}{2}}\right)^{q}\right)^{\frac{1}{q}}
 \\
 & \lesssim \|f\|_{L_{p,q}} + \frac{1}{\left(\frac{1}{p}-\frac{1}{2}\right)^{\frac{1}{q}}} \|f\|_{L_{p,q}}.
\end{align*}
\end{proof}

\begin{proof}[Proof of Theorem \ref{T-e}]
Combining Lemmas~\ref{lemma} and \ref{lemma2}, we obtain the desired result.    
\end{proof}

\subsection{Proofs of Theorem \ref{0.3} and Theorem \ref{T-2-2}}

\begin{proof}[Proof of Theorem \ref{0.3}]
Let \( \varepsilon > 0 \). By Lemma \ref{lemma} with $\frac{1}{p}=\frac{1}{2}+\varepsilon$, we have 
\[
\left( \sum_{k=1}^\infty \left( k^{\frac{1}{2} - \varepsilon} \left( \frac{1}{k} \sum_{m=1}^{k} (a_m^*)^{2} \right)^{1/2} \right)^q \frac{1}{k} \right)^{1/q} \leq c \varepsilon^{-\frac{1}{2}}  
\left( \int_0^1 (t^{\frac{1}{2} + \varepsilon} f^*(t))^q \frac{dt}{t} \right)^{\frac{1}{q}}.
\]

Now multiplying both sides by \(\varepsilon^{\theta_1} \), we get  
\[
\varepsilon^{\theta_1}  
\left( \sum_{k=1}^\infty \left( k^{ - \varepsilon} \left(  \sum_{m=1}^{k} (a_m^*)^{2} \right)^{1/2} \right)^q \frac{1}{k} \right)^{1/q} 
\leq c\varepsilon^{\theta}  
\left( \int_0^1 (t^{\frac{1}{2} + \varepsilon} f^*(t))^q \frac{dt}{t} \right)^{\frac{1}{q}}  
\leq c \|f\|_{GL^{\theta}_{2,q}}.
\]

Thus, 
\[
\|a\|_{G^{\theta_1}\ell^{*}_{2,q}} \lesssim \|f\|_{G^{\theta}L_{2,q}}.
\]

\end{proof}

\begin{remark}\label{ORBB}
Let us show that the lower bound in Theorem \ref{0.3} refines Bochkarev's lower bound.
A direct computation gives 
\begin{align*}
\|a\|_{G^{\frac{1}{2}}\ell^{*}_{2,q}} 
&= \sup_{0 < \varepsilon < 1} \varepsilon^{\frac{1}{2}} \left( \sum_{k=1}^\infty \left( k^{-\varepsilon} \left( \sum_{m=1}^{k} (a_m^*)^2 \right)^{1/2} \right)^q \frac{1}{k} \right)^{1/q} \\
&\ge \sup_{0 < \varepsilon < 1} \varepsilon^{\frac{1}{2}} \left( \sum_{k=n}^\infty \left( k^{-\varepsilon} \left( \sum_{m=1}^{k} (a_m^*)^2 \right)^{1/2} \right)^q \frac{1}{k} \right)^{1/q} \\
&\ge \sup_{0 < \varepsilon < 1} \varepsilon^{\frac{1}{2}} \left( \sum_{k=n}^\infty k^{-q\varepsilon - 1} \right)^{1/q} \left( \sum_{m=1}^{n} (a_m^*)^2 \right)^{1/2} \\
&\ge \sup_{0 < \varepsilon < 1} \varepsilon^{\frac{1}{2}}  \frac{(n+1)^{-\varepsilon}}{\varepsilon^{1/q}} \left( \sum_{m=1}^{n} (a_m^*)^2 \right)^{1/2} \\
&= \frac{(n+1)^{-\varepsilon}}{\varepsilon^{\frac{1}{q} - \frac{1}{2}}} \left( \sum_{m=1}^{n} (a_m^*)^2 \right)^{1/2} .
\end{align*}
Let \( \varepsilon = \frac{1}{\ln(n+1)} \). Then it follows that
\begin{align*}
\|a\|_{G^{\frac{1}{2}}\ell^{*}_{2,q}} &\ge  \frac{(n+1)^{\frac{-1}{ln(n+1)}}}{\left( \frac{1}{\ln(n+1)} \right)^{\frac{1}{q} - \frac{1}{2}}} \left( \sum_{m=1}^{n} (a_m^*)^2 \right)^{1/2} \\
&= \frac{e^{-1}}{\left( \frac{1}{\ln(n+1)} \right)^{\frac{1}{q} - \frac{1}{2}}} \left( \sum_{m=1}^{n} (a_m^*)^2 \right)^{1/2} \\
&= \frac{e^{-1}}{\left( \ln(n+1) \right)^{\frac{1}{2} - \frac{1}{q}}} \left( \sum_{m=1}^{n} (a_m^*)^2 \right)^{1/2}.
\end{align*}
\end{remark}

\begin{proof}[Proof of Theorem \ref{T-2-2}]
Let \( \varepsilon > 0 \). From Lemma \ref{lemma2} with $\frac{1}{p}=\frac{1}{q}+\varepsilon$, it follows that  
\[
\left( \sum_{k=1}^\infty \left( k^{\frac{1}{2} - \varepsilon} \left( \frac{1}{k} \sum_{m=1}^{k} (a_m^*)^{2} \right)^{1/2} \right)^q \frac{1}{k} \right)^{1/q} 
\leq c \varepsilon^{-\frac{1}{q}}  
\left( \int_0^1 (t^{\frac{1}{2} + \varepsilon} f^*(t))^q \frac{dt}{t} \right)^{\frac{1}{q}}.
\]
Multiplying both sides by \(\varepsilon^{\theta_1} \), we have  
\[
\varepsilon^{\theta_1}  
\left( \sum_{k=1}^\infty \left( k^{ - \varepsilon} \left(  \sum_{m=1}^{k} (a_m^*)^{2} \right)^{1/2} \right)^q \frac{1}{k} \right)^{1/q}  
\leq c \varepsilon^{\theta}  
\left( \int_0^1 (t^{\frac{1}{2} + \varepsilon} f^*(t))^q \frac{dt}{t} \right)^{\frac{1}{q}}  
\leq c \|f\|_{G^{\theta}L_{2,q}}.
\]

It yields  
\[
\|a\|_{G^{\theta_1}\ell^{*}_{2,q}} \lesssim \|f\|_{G^{\theta}L_{2,q}}.
\]

\end{proof}

\subsection{Proofs of Theorem \ref{interpolation} and Theorem \ref{Best-LB}.}

By applying the same argument as in the Remark \ref{ORBB}, we obtain the following lemma.

\begin{lemma}\label{ImBoch}
Let \(1 \le p < q < \infty\). Then
\begin{equation}
\sup_{s > 0} s^{\frac{1}{q}} \left( \frac{1}{s} \sum_{m=1}^{ 2^s } (a_m^*)^p \right)^{\frac{1}{p}} \lesssim \|a\|_{G^{\frac{1}{p}}\ell^{*}_{p,q}},
\end{equation}
where
\[
\|a\|_{G^{\theta}\ell^{*}_{p,q}} = \sup_{0 < \varepsilon < 1} \varepsilon^{\theta} \left( \sum_{k=1}^\infty \left( k^{-\varepsilon} \left( \sum_{m=1}^k (a_m^*)^p \right)^{\frac{1}{p}} \right)^q \frac{1}{k} \right)^{\frac{1}{q}}
\]
and \(a_m^*\) is the non-increasing rearrangement sequence.
\end{lemma}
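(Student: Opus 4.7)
\smallskip

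\textbf{Proof proposal for Lemma \ref{ImBoch}.} The plan is to reproduce, in the general $(p,q)$ setting, the extraction argument already carried out for $(2,q)$ in Remark \ref{ORBB}. Fix $s>0$ large enough that $\varepsilon := 1/s$ lies in $(0,1)$; small $s$ will be handled trivially at the end. Starting from the definition
\[
\|a\|_{G^{\frac{1}{p}}\ell^{*}_{p,q}} = \sup_{0<\varepsilon<1}\varepsilon^{\frac{1}{p}}\left(\sum_{k=1}^{\infty} \left(k^{-\varepsilon}\left(\sum_{m=1}^{k}(a_{m}^{*})^{p}\right)^{\frac{1}{p}}\right)^{q}\frac{1}{k}\right)^{\frac{1}{q}},
\]
I would restrict the outer summation to $k\ge 2^{s}$ and use the monotonicity $\sum_{m=1}^{k}(a_{m}^{*})^{p}\ge \sum_{m=1}^{2^{s}}(a_{m}^{*})^{p}$ for such $k$. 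This pulls the partial sum out of the $k$-summation and leaves a purely scalar tail estimate to control.

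Next I would compare the remaining tail with an integral,
\[
\sum_{k\ge 2^{s}} k^{-q\varepsilon-1}\;\gtrsim\;\int_{2^{s}}^{\infty} x^{-q\varepsilon-1}\,dx\;=\;\frac{2^{-sq\varepsilon}}{q\varepsilon},
\]
obtaining, for every admissible $\varepsilon$,
\[
\|a\|_{G^{\frac{1}{p}}\ell^{*}_{p,q}} \;\gtrsim\; \varepsilon^{\frac{1}{p}-\frac{1}{q}}\, 2^{-s\varepsilon}\, \Bigl(\sum_{m=1}^{2^{s}}(a_{m}^{*})^{p}\Bigr)^{\frac{1}{p}}.
\]
Now I would optimize in $\varepsilon$. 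The natural (and essentially optimal) choice is $\varepsilon=1/s$, which makes $2^{-s\varepsilon}=1/2$ and turns the factor $\varepsilon^{1/p-1/q}$ into $s^{1/q-1/p}$. Plugging this back yields
\[
\|a\|_{G^{\frac{1}{p}}\ell^{*}_{p,q}} \;\gtrsim\; s^{\frac{1}{q}-\frac{1}{p}}\Bigl(\sum_{m=1}^{2^{s}}(a_{m}^{*})^{p}\Bigr)^{\frac{1}{p}} \;=\; s^{\frac{1}{q}}\Bigl(\frac{1}{s}\sum_{m=1}^{2^{s}}(a_{m}^{*})^{p}\Bigr)^{\frac{1}{p}},
\]
and taking the supremum over $s>0$ gives the desired inequality. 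For small $s$ (where $1/s\ge 1$ is not admissible), the left-hand side is bounded by a constant times $\bigl(\sum_{m\ge 1}(a_{m}^{*})^{p}\bigr)^{1/p}$, which is in turn controlled by $\|a\|_{G^{\frac{1}{p}}\ell^{*}_{p,q}}$ by choosing any fixed $\varepsilon \in (0,1)$ in the sup defining the grand norm.

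The argument is structurally identical to Remark \ref{ORBB}; the main bookkeeping point is making sure the powers of $\varepsilon$ combine correctly, specifically that $\varepsilon^{1/p}\cdot\varepsilon^{-1/q}=\varepsilon^{1/p-1/q}$ (positive since $p<q$), so that substituting $\varepsilon=1/s$ produces precisely the exponent $1/q$ on $s$ that the lemma demands. I do not anticipate a serious obstacle: the only delicate moment is confirming the integral-comparison constants are uniform in $\varepsilon$ on compact sub-intervals away from $\varepsilon=0$, which is standard since the sum and the integral differ by at most the first term $(2^{s})^{-q\varepsilon - 1}$, itself a lower-order contribution for the chosen $\varepsilon=1/s$.
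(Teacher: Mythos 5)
Your core argument is correct and is essentially the paper's own proof: restrict the outer sum to $k\ge 2^{s}$, use monotonicity of the partial sums, bound the tail by $\sum_{k\ge 2^{s}}k^{-q\varepsilon-1}\gtrsim (q\varepsilon)^{-1}2^{-sq\varepsilon}$, and then take $\varepsilon\asymp 1/s$; the paper instead maximizes $\varepsilon^{\frac1p-\frac1q}2^{-s\varepsilon}$ exactly, choosing $\varepsilon=\bigl(\tfrac1p-\tfrac1q\bigr)/\ln 2^{s}$, which differs from your choice only by constants depending on $p,q$. One caveat on your small-$s$ fallback: the assertion that $\bigl(\sum_{m\ge1}(a_m^*)^p\bigr)^{1/p}$ is controlled by $\|a\|_{G^{\frac1p}\ell^{*}_{p,q}}$ via a fixed $\varepsilon$ is false, since at fixed $\varepsilon$ the weight $k^{-q\varepsilon-1}$ is summable and the resulting quantity cannot dominate the full $\ell_p$-norm; e.g.\ $a_m^*\asymp m^{-1/p}(\log(m+2))^{-1/q}$ has finite grand norm (the partial sums grow like $(\log k)^{\frac1p-\frac1q}$, exactly the growth your lemma permits) but infinite $\ell_p$-norm. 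This does not damage the result: in the range where the lemma is actually used ($s\ge 1$, as in the proof of Theorem \ref{interpolation}), the only value with $1/s\notin(0,1)$ is $s=1$, and there you can simply run the same main estimate with $\varepsilon=\tfrac12$, which gives the bound with a constant depending only on $p$ and $q$. (For real $s\to 0^{+}$ the stated supremum degenerates anyway, and the paper's proof likewise only treats $s$ bounded away from $0$.)
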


\begin{proof}[Proof of Lemma \ref{ImBoch}]
For \(n = 2^s \),  we have
\[
s^{\frac{1}{q}} \left( \frac{1}{s} \sum_{m=1}^n (a_m^*)^p \right)^{\frac{1}{p}} = s^{\frac{1}{q} - \frac{1}{p}} \left( \sum_{m=1}^n (a_m^*)^p \right)^{\frac{1}{p}}.
\]
Consider the norm restricted to \(k \geq n\):
\[
  \sup_{0 < \varepsilon < 1} \varepsilon^{\frac{1}{p}} \left( \sum_{k=n}^\infty \left( k^{-\varepsilon} \left( \sum_{m=1}^k (a_m^*)^p \right)^{\frac{1}{p}} \right)^q \frac{1}{k} \right)^{\frac{1}{q}} \leq \|a\|_{G^{\frac{1}{p}}\ell^{*}_{p,q}}.
\]
Since \(\sum\limits_{m=1}^k (a_m^*)^p \geq \sum\limits_{m=1}^n (a_m^*)^p\) for \(k \geq n\), we get
\[
 \sup_{0 < \varepsilon < 1} \varepsilon^{\frac{1}{p}} \left( \sum_{k=n}^\infty k^{-q\varepsilon - 1} \right)^{\frac{1}{q}} \left( \sum_{m=1}^n (a_m^*)^p \right)^{\frac{1}{p}} \leq \|a\|_{G^{\frac{1}{p}}\ell^{*}_{p,q}} .
\]
The integral test gives \(\sum\limits_{k=n}^\infty k^{-q\varepsilon - 1} \ge \frac{n^{-q\varepsilon}}{q \, \varepsilon}\), that is, we obtain
\[
 \frac{n^{-\varepsilon}}{\varepsilon^{\frac{1}{q}}} \lesssim \left( \sum_{k=n}^\infty k^{-q\varepsilon - 1} \right)^{\frac{1}{q}},
\]
and
\[
 \sup_{0 < \varepsilon < 1} \frac{\varepsilon^{\frac{1}{p} - \frac{1}{q}}}{n^{\varepsilon}} \left( \sum_{m=1}^n (a_m^*)^p \right)^{\frac{1}{p}} \lesssim \|a\|_{G^{\frac{1}{p}}\ell^{*}_{p,q}} . 
\]
Let us maximize the expression \(\varepsilon^{\frac{1}{p} - \frac{1}{q}} n^{-\varepsilon}\).  Set \(\Phi(\varepsilon)= \varepsilon^{\frac{1}{p} - \frac{1}{q}} n^{-\varepsilon}\) and
\[
g(\varepsilon)=\ln \Phi(\varepsilon) = \left(\frac{1}{p} - \frac{1}{q}\right) \ln \varepsilon - \varepsilon \ln n .
\]
If we differentiate and set the derivative to zero, we get 
\[
\Phi'(\varepsilon) = \Phi(\varepsilon) g'(\varepsilon)=\Phi(\varepsilon) \left[ \frac{(\frac{1}{p} - \frac{1}{q}) }{\varepsilon} - \ln n \right] =0, \quad  \Rightarrow \quad \varepsilon = \frac{\frac{1}{p} - \frac{1}{q}}{\ln n}.
\]
Substituting, we have 
\[
e^{-\left(\frac{1}{p} - \frac{1}{q}\right)} \left( \frac{1}{p} - \frac{1}{q} \right)^{\frac{1}{p} - \frac{1}{q}} (\ln n)^{\frac{1}{q} - \frac{1}{p}} \left( \sum_{m=1}^n (a_m^*)^p \right)^{\frac{1}{p}} \lesssim \|a\|_{G^{\frac{1}{p}}\ell^{*}_{p,q}}.
\]
The relation \(\ln n \asymp s\) gives
\[
s^{\frac{1}{q} - \frac{1}{p}} \left( \sum_{m=1}^n (a_m^*)^p \right)^{\frac{1}{p}} \lesssim  e^{\frac{1}{p} - \frac{1}{q}} \left(\frac{1}{p}- \frac{1}{q} \right)^{\frac{1}{q} - \frac{1}{p}} \|a\|_{G^{\frac{1}{p}}\ell^{*}_{p,q}}.
\]
Hence,
\[
\sup\limits_{s > 0} s^{\frac{1}{q}} \left( \frac{1}{s} \sum\limits_{m=1}^{ 2^s } (a_m^*)^p \right)^{\frac{1}{p}} \lesssim \|a\|_{G^{\frac{1}{p}}\ell^{*}_{p,q}}.
\]
\end{proof}

Let us recall the definition of the sequence space
\begin{equation}\label{sequencespace}
\Lambda_{p, q, \tau} = \left\{ a = \{a_k\}_{k=1}^\infty : \left( \sum_{k=1}^{\infty} \left( k^{\frac{1}{q}} \sup_{m \geq k} \left( \frac{1}{m} \sum_{l=1}^{2^{m}} (a^*_l)^p \right)^{\frac{1}{p}} \right)^{\tau} \frac{1}{k} \right)^{\frac{1}{\tau}} < \infty \right\},
\end{equation}
where \( \{a^*_k\}_{k=1}^\infty \) denotes the non-increasing rearrangement of the sequence \( \{a_k\}_{k=1}^\infty \). Here, the parameters \( p, q,\) and $\tau$ are assumed to be in \( (0, \infty) \).

\begin{proof}[Proof of Theorem \ref{interpolation}]
Let \( a \in (G^{\frac{1}{p}}\ell^{*}_{p,q_0}, G^{\frac{1}{p}}\ell^{*}_{p,q_1})_{\eta \, \tau} \), and let \( a = a_0 + a_1 \) be an arbitrary decomposition with \( a_0 \in G^{\frac{1}{p}}\ell^{*}_{p,q_0} \) and \( a_1 \in G^{\frac{1}{p}}\ell^{*}_{p,q_1} \). For any \(k \in \mathbb{N} \) we have 
\begin{align*}
k^{\frac{1}{q}} \sup_{s\ge k} &\left( \frac{1}{s} \sum_{m=0}^{2^{s}} (a^{*}_m)^p\right)^{\frac{1}{p}} 
\\
& \le k^{\frac{1}{q}-\frac{1}{q_0}} \sup_{s\ge k} \left( k^{\frac{1}{q_0}}\left(\frac{1}{s} \sum_{m=0}^{2^{s}} (a^{*}_{0 \,m})^p\right)^{\frac{1}{p}} + k^{\frac{1}{q_0}-\frac{1}{q_1}} k^{\frac{1}{q_1}} \left(\frac{1}{s} \sum_{m=0}^{2^{s}} (a^{*}_{1\,m})^p\right)^{\frac{1}{p}}\right)
\\
& \le k^{\frac{1}{q}-\frac{1}{q_0}} \left(  \sup_{s\ge k} s^{\frac{1}{q_0}}\left(\frac{1}{s} \sum_{m=0}^{2^{s}} (a^{*}_{0\,m})^p\right)^{\frac{1}{p}} + k^{\frac{1}{q_0}-\frac{1}{q_1}}  \sup_{s\ge k} s^{\frac{1}{q_1}} \left(\frac{1}{s} \sum_{m=0}^{2^{s}} (a^{*}_{1\,m})^p\right)^{\frac{1}{p}}\right).
\end{align*}
By applying Lemma \ref{ImBoch}, we get 
\begin{align*}
k^{\frac{1}{q}} \sup_{s\ge k} \left( \frac{1}{s} \sum_{m=0}^{2^{s}} (a^{*}_m)^p \right)^{\frac{1}{p}}  & \le k^{\frac{1}{q}-\frac{1}{q_0}} \left(  \|a_0\|_{G^{\frac{1}{p}}\ell^{*}_{p,q_0}}+ k^{\frac{1}{q_0}-\frac{1}{q_1}}  \|a_1\|_{G^{\frac{1}{p}}\ell^{*}_{p,q_1}}\right).
\end{align*}

Since this relation holds for an arbitrary decomposition, it follows that 
\begin{align}\label{gor}
\begin{split}
k^{\frac{1}{q}} \sup_{s\ge k} \left( \frac{1}{s} \sum_{m=0}^{2^{s}} (a^{*}_m)^p\right)^{\frac{1}{p}}  & \le 2^{ k\,\left[\frac{1}{q}-\frac{1}{q_0}\right]} \inf_{a=a_0 + a_1}  \left( \|a_0\|_{G^{\frac{1}{p}}\ell^{*}_{p,q_0}} + 2^{ k\, \left[\frac{1}{q_0}-\frac{1}{q_1}\right]} \|a_1\|_{G^{\frac{1}{p}}\ell^{*}_{p,q_1}}  \right) 
\\
&\le 2^{ k{\left[\frac{1}{q}-\frac{1}{q_0}\right]}} K\left( 2^ { k \, \left[\frac{1}{q_0}-\frac{1}{q_1}\right]} , a ; G^{\frac{1}{p}}\ell^{*}_{p,q_0} , G^{\frac{1}{p}}\ell^{*}_{p,q_1}\right)
\\
&= 2^{-\eta k{\left[\frac{1}{q_0}-\frac{1}{q_1} \right] }} 
K\left(2^{ k\, \left[\frac{1}{q_0}-\frac{1}{q_1}\right]} , a ; G^{\frac{1}{p}}\ell^{*}_{p,q_0} , G^{\frac{1}{p}}\ell^{*}_{p,q_1} \right).
\end{split}
\end{align}

For \(b = 2^{\frac{1}{q_0} - \frac{1}{q_1}} > 1\), we have 
\begin{align*}
 &\left(\int_{0}^{\infty} \left( t^{-\eta} K(t,a; G^{\frac{1}{p}}\ell^{*}_{p,q_0} , G^{\frac{1}{p}}\ell^{*}_{p,q_1})\right)^{\tau} \frac{dt}{t}\right)^{\frac{1}{\tau}}   
 \\
 &\asymp  \left(\sum_{k=0}^{\infty} \left( b^{-\eta k} K(b^{k},a; G^{\frac{1}{p}}\ell^{*}_{p,q_0} , G^{\frac{1}{p}}\ell^{*}_{p,q_1})\right)^{\tau} \right)^{\frac{1}{\tau}} 
 \\
 & = \left(\sum_{k=0}^{\infty} \left( 2^{-\eta k \left[\frac{1}{q_0} - \frac{1}{q_1}\right]} K(2^{k \left[\frac{1}{q_0} - \frac{1}{q_1}\right]},a; G^{\frac{1}{p}}\ell^{*}_{p,q_0} , G^{\frac{1}{p}}\ell^{*}_{p,q_1})\right)^{\tau} \right)^{\frac{1}{\tau}}
 \\
 & \ge_{\eqref{gor}} \left(\sum_{k=0}^{\infty} \left( 2^{\frac{k}{q}} \sup_{s \ge 2^{k} } \left( \frac{1}{s} \sum_{m=1}^{ 2^s } (a_m^*)^p \right)^{\frac{1}{p}} \right)^{\tau} \right)^{\frac{1}{\tau}}
 \\
 & \asymp \left(\sum_{k=0}^{\infty} \left( k^{\frac{1}{q}} \sup_{s \ge k } \left( \frac{1}{s} \sum_{m=1}^{ 2^s } (a_m^*)^p \right)^{\frac{1}{p}} \right)^{\tau} \frac{1}{k}\right)^{\frac{1}{\tau}}
 \\
& = \|a\|_{\Lambda_{p,q,\tau}}.
\end{align*}
\end{proof}

We recall the space $L_{p,q,\tau}([0,1])$, $0<p,q,\tau \leq \infty,$ defined as the space of measurable functions $f$ such that

\begin{equation}\label{Lpqtau}
\|f\|_{L_{p,q,\tau}} := \left( \sum_{k=1}^{\infty} \left( k^{\frac{1}{q}} \xi_k^{*}(f) \right)^\tau \right)^{1/\tau} < \infty. \end{equation}

Here, $\left\{ \xi_k^{*}(f) \right\}_{k=1}^{\infty}$ is the non-increasing rearrangement of the sequence 

\[ \xi_m(f) = \left( \int_{2^{-m-1}}^{2^{-m}} (f^*(t))^p dt \right)^{1/p}, \quad m \in \mathbb{N}. \]

It gives a generalization of the space $L_{p,q}([0,1])$. Thus, if $ \tau = q$, then $L_{p,q,q}([0,1]) = L_{p,q}([0,1])$, and in the case $p = q = \tau$, we get the Lebesgue space.  


This space was introduced by Nursultanov (see \cite{Nursultanov-book}). For these spaces, in particular, the following fact holds: for $ 0 < q_{0} < q_{1} \leq \infty $,

\[ \left( L_{p,q_0}, L_{p,q_1} \right)_{\eta\, \tau} = L_{p,q,\tau}([0,1]) \]
where
\[ \frac{1}{q} = \frac{1-\eta}{q_0} + \frac{\eta}{q_1}, \quad 0 < \eta < 1. \]

\begin{proof}[Proof of Theorem \ref{Best-LB}]
Let \(2<q<\infty\) and \(q_0, \, q_1\) such that \(2<q_0<q<q_1 \le \infty\). Let \( \{ a_m \} \) denote the sequence of Fourier coefficients of \( f \), as defined in \eqref{Coef}. Define the linear operator \( T \) by \(T(f) = \{ a_m \}_{m},\)
which maps \( f \) to the sequence of its Fourier coefficients. By Theorem \ref{0.3}, we have 
\begin{align*}
\begin{cases}
 \|Tf\|_{G^{\frac{1}{2}}\ell^{*}_{2,q_0}}  & \lesssim \|f\|_{L_{2,q_0}},
 \\
 \|Tf\|_{G^{\frac{1}{2}}\ell^{*}_{2,q_1}}   &\lesssim \|f\|_{L_{2,q_1}}.
\end{cases}    
\end{align*}
Now using Theorem \ref{interpolation} and \cite[Theorem 2]{Nur1997} (see \cite{Nursultanov-book} for more details), we have 
\[
(L_{2,q_0} , L_{2,q_1})_{\eta \tau} = L_{2,q,\tau} \qquad \frac{1}{q}= \frac{1-\eta}{q_0} + \frac{\eta}{q_1}.
\]
Thus, we arrive at
\[
\|Tf\|_{\Lambda_{2,q,\tau}} \lesssim \|f\|_{L_{2,q,\tau}}.
\]
\end{proof}

\begin{remark}\label{Sharp lower}
Let us compare the obtained new lower bound in Theorem \ref{Best-LB} to the lower bound
\begin{equation}\label{STLB}
\left( \sum_{n=1}^{\infty}  \frac{1}{n \log^{q/2}(n+1)} \left(\sum_{j=1}^{n} (a_j^*)^2 \right)^{q/2} \right)^{1/q}, \quad 2 < q \leq \infty,
\end{equation}

from the recent work \cite{ST}. 

We have 
\[
\sum_{n=1}^{\infty} = \sum_{k=0}^{\infty} \sum_{n=2^k}^{2^{k+1}-1}.
\]
Observe that for all \( n \in [2^k, 2^{k+1}) \), we have \( \log(n+1) \asymp \log(2^{k+1}) = (k+1) \log 2 \), and  \(n \asymp 2^k\) hence
\[
\frac{1}{n \log^{q/2}(n+1)} \asymp \frac{1}{2^k (k+1)^{q/2}}.
\]
Then, the sum becomes
\begin{align*}
\sum_{n=1}^{\infty} \frac{1}{n \log^{q/2}(n+1)} \left( \sum_{j=1}^{n} (a_j^*)^2 \right)^{\frac{q}{2}}
&\lesssim \sum_{k=0}^{\infty} \frac{1}{(k+1)^{q/2}} \left( \sum_{j=1}^{2^{k+1}} (a_j^*)^2 \right)^{\frac{q}{2}}
\\
&= \sum_{k=1}^{\infty} \frac{1}{k^{q/2}} \left( \sum_{j=1}^{2^{k}} (a_j^*)^2 \right)^{\frac{q}{2}}.
\end{align*}
Therefore, we obtain 
\begin{align*}
\left( \sum_{n=1}^{\infty} \frac{1}{n \log^{q/2}(n+1)} \left( \sum_{j=1}^{n} (a_j^*)^2 \right)^{\frac{q}{2}} \right)^{1/q}
&\asymp
\left(  \sum_{k=1}^{\infty} \frac{1}{k^{q/2}} \left( \sum_{j=1}^{2^{k}} (a_j^*)^2 \right)^{\frac{q}{2}}  \right)^{1/q}
\\
& \le \left(  \sum_{k=1}^{\infty}  \left( \sup_{s \ge k } \left( \frac{1}{s}\sum_{j=1}^{2^{s}} (a_j^*)^2 \right)^{\frac{1}{2}} \right)^{q}  \right)^{1/q}
\\
& \le \left(  \sum_{k=0}^{\infty}  \left(\sup_{s \ge k } \left( \frac{1}{s}\sum_{j=1}^{2^{s}} (a_j^*)^2 \right)^{\frac{1}{2}} \right)^{q}  \right)^{1/q}
\\
&\asymp \|a\|_{\Lambda_{2,q,q}}.
\end{align*}
Note that the lower bound \eqref{STLB} becomes
\[
\sup_{n} \frac{1}{\log^{\frac{1}{2}}(n+1) } \left( \sum_{j=1}^{n} (a_j^{*})^2\right)^{\frac{1}{2}}
\]
when \( q = \infty \).
\end{remark}

\section{Acknowledgements}

This research was funded by Nazarbayev University under
Collaborative Research Program Grant 20122022CRP1601.
Durvudkhan Suragan thanks Centre de Recerca Matemàtica for hosting a week research visit in May
2025, part of the Intensive Research Programme on Modern Trends in Fourier Analysis.

\end{document}